\documentclass[11pt]{article}

\usepackage[margin=1in]{geometry}
\usepackage{amsmath,amssymb,amsthm,mathtools}
\usepackage{booktabs,tabularx,array}
\usepackage{enumitem}
\usepackage{microtype}
\usepackage{xcolor}
\usepackage[colorlinks=true,linkcolor=blue!55!black,citecolor=blue!55!black,urlcolor=blue!55!black]{hyperref}

\allowdisplaybreaks
\setlist[itemize]{leftmargin=*,topsep=3pt,itemsep=2pt}

\newtheorem{theorem}{Theorem}[section]
\newtheorem{lemma}[theorem]{Lemma}
\newtheorem{proposition}[theorem]{Proposition}

\theoremstyle{definition}
\newtheorem{definition}[theorem]{Definition}
\newtheorem{remark}[theorem]{Remark}

\newcommand{\R}{\mathbb{R}}
\newcommand{\E}{\mathbb{E}}
\newcommand{\Prob}{\mathbb{P}}
\newcommand{\eps}{\varepsilon}
\newcommand{\ind}{\mathbf{1}}
\newcommand{\KL}{D_{\mathrm{KL}}}
\newcommand{\op}{\mathrm{op}}
\newcommand{\sech}{\operatorname{sech}}

\newcommand{\calF}{\mathcal{F}}
\newcommand{\calZ}{\mathcal{Z}}
\newcommand{\calP}{\mathcal{P}}
\newcommand{\calG}{\mathcal{G}}
\newcommand{\od}{\odot}

\title{A Tight Lower Bound for Smooth Nonconvex Stochastic Optimization
with Bounded Gradient Noise}
\author{Jikai Jin}
\date{\today}

\begin{document}
\maketitle

\begin{abstract}
We prove a sharp lower bound for smooth nonconvex stochastic optimization
with uniformly bounded gradient noise.  In the \(K=1\) fresh-sample model,
every randomized adaptive algorithm requires
\[
  \Omega\!\left(
    \frac{\Delta L}{\eps^2}
    +
    \frac{\Delta L\sigma^2}{\eps^4}
  \right)
\]
queries to find a point with expected gradient norm at most \(\eps\).
This matches the standard upper bound and, to the best of our
knowledge, resolves the question raised by~\cite{ArjevaniEtAl2023} of
whether almost-surely bounded oracle error permits a better rate
than bounded variance.  

The proof was independently generated with GPT-5.6 Sol in Codex's Ultra mode
during a two-hour session.
The human author supplied the prompt and was responsible only for
checking the proof and revising and polishing the manuscript.
\end{abstract}

\section{Introduction}

For smooth nonconvex objectives, the standard first-order target is an
$\eps$-stationary point.  In the stochastic setting, stochastic gradient
descent achieves the familiar complexity
\[
  O\!\left(\frac{\Delta L}{\eps^2}
  +\frac{\Delta L\sigma^2}{\eps^4}\right)
\]
under an unbiased bounded-variance oracle; see, for example,
Ghadimi and Lan~\cite{GhadimiLan2013}.  Arjevani et
al.~\cite{ArjevaniEtAl2023} proved that this rate is minimax optimal in the
bounded-variance class.  Their rare-revelation construction, however, does
not satisfy a uniform almost-sure bound on the oracle error.  In their
discussion they explicitly identified the extension to almost-surely bounded
error as a nontrivial open direction.

This manuscript closes that direction for the $K=1$ fresh-sample oracle
model defined in Section~\ref{sec:model}.  The main difficulty is that a rare unbiased reveal in prior constructions normally has size
inversely proportional to its reveal probability and therefore violates the
bounded-noise constraint. Instead, we represent the next chain coordinate
by a random sign vector and distribute its gradient contribution across many
bounded signs with small biases.  Every response has fixed Euclidean
magnitude, but its Kullback--Leibler information about the unknown sign
vector remains small.  An additional $\log\cosh$ term prevents an algorithm
from escaping to large-norm points where the encoded chain might otherwise
become spuriously stationary.

\section{Related work and status of the question}
\label{sec:related}

\paragraph{Deterministic zero-chains.}
The scalar hard function that we use is a rescaled version of the smooth zero-chain used
by Carmon, Duchi, Hinder, and Sidford~\cite{CarmonEtAl2020}.  Their work
establishes the sharp deterministic $\Theta(\Delta L\eps^{-2})$ complexity
for smooth high-dimensional nonconvex optimization.  Random embeddings make
such sequential constructions hard for general, rather than merely
zero-respecting, algorithms.

\paragraph{Bounded variance and mean-squared smoothness.}
Arjevani et al.~\cite{ArjevaniEtAl2023} obtain the sharp
$\Omega(\Delta L\sigma^2\eps^{-4})$ lower bound under bounded second moment,
and an $\eps^{-3}$ lower bound under a mean-squared-smooth oracle model.
Variance-reduced methods such as SPIDER~\cite{FangEtAl2018} attain the latter
dependence when common randomness can be used at two query points.  These
results do not imply the theorem below: bounded variance permits arbitrarily
large individual errors, while uniform almost-sure boundedness is a strictly
smaller oracle class. Arjevani et al.~\cite[Section~6]{ArjevaniEtAl2023} state that extending their
lower bounds to almost-surely bounded error is nontrivial and leave the role
of that assumption for future work.

\paragraph{Other related results.}
Recent heavy-tail work, including Fradin et al.~\cite{FradinEtAl2026}, gives
lower bounds under bounded $p$th central moments.  Its stochastic zero-chain
estimator contains a Bernoulli reveal divided by the reveal probability, so
the construction is not uniformly bounded in the hard regime.  Results for
specific algorithms or refined coordinate geometry, such as the AdaGrad and
SGD comparisons of Jiang, Maladkar, and Mokhtari~\cite{JiangEtAl2025}, do not
give an algorithm-independent minimax lower bound for the Euclidean
stationarity criterion considered here.

\begin{table}[t]
\centering
\small
\caption{Closest complexity results and relationship to
Theorem~\ref{thm:main}.}
\vspace{5pt}
\label{tab:literature}
\begin{tabularx}{\textwidth}{@{}>{\raggedright\arraybackslash}p{0.19\textwidth}
>{\raggedright\arraybackslash}p{0.25\textwidth}X@{}}
\toprule
Setting & Representative result & Relation to this manuscript \\
\midrule
Bounded variance & Sharp $\eps^{-4}$ minimax lower bound
\cite{ArjevaniEtAl2023} & The hard oracle is not uniformly bounded; that
paper lists this strengthening as an open extension. \\
Mean-squared smoothness & Sharp $\eps^{-3}$ theory with shared-sample
queries \cite{ArjevaniEtAl2023,FangEtAl2018} & A different oracle assumption
and interaction model; it does not settle uniformly bounded noise with
$K=1$. \\
Bounded $p$th moments & Heavy-tail minimax bounds
\cite{FradinEtAl2026} & Moment control does not provide a common uniform
support bound; the hard estimator uses rare amplified reveals. \\
Algorithm-specific or refined geometry & AdaGrad/SGD lower bounds under
coordinate structure \cite{JiangEtAl2025} & Not a lower bound for every
randomized adaptive method in the present Euclidean model. \\
This manuscript & $\eps^{-4}$ minimax lower bound & Uniformly bounded noise,
fixed law, fresh samples, $K=1$, and arbitrary randomized adaptive
algorithms. \\
\bottomrule
\end{tabularx}
\end{table}

\section{Notation and problem formulation}
\label{sec:model}

Throughout this paper, we use $\|\cdot\|$ to denote Euclidean norm, $\log$ to denote the natural logarithm, and write $[m]=\{1,\ldots,m\}$.
For $d\in\mathbb N$ and $\Delta,L>0$, define
\begin{equation}
\calF_d(\Delta,L)=
\left\{F:\R^d\to\R:
\begin{array}{l}
F\text{ is continuously differentiable and bounded below},\\
F(0)-\inf_xF(x)\le\Delta,\\
\|\nabla F(x)-\nabla F(y)\|\le L\|x-y\|\quad\forall x,y
\end{array}
\right\}.
\label{eq:function-class}
\end{equation}

\begin{definition}[Fixed-law uniformly bounded oracle]
\label{def:oracle}
For $F:\R^d\to\R$, a stochastic first-order oracle is a probability space
$(\calZ,\calP)$ and a jointly Borel measurable map
$g:\R^d\times\calZ\to\R^d$ satisfying
\begin{equation}
  \E_{Z\sim\calP}g(x,Z)=\nabla F(x)\qquad\forall x\in\R^d.
  \label{eq:unbiased}
\end{equation}
It has uniformly almost-surely bounded noise of radius $\sigma$ if there is
one measurable set $\calZ_0$, common to every $x$, with
$\calP(\calZ_0)=1$ and
\begin{equation}
  \|g(x,z)-\nabla F(x)\|\le\sigma
  \qquad\forall x\in\R^d,\quad \forall z\in\calZ_0.
  \label{eq:bounded-noise}
\end{equation}
Successive queries use fresh independent draws from the same law $\calP$.
\end{definition}

A randomized adaptive $N$-query algorithm on $\R^d$ is represented by a
private seed $U$, independent of the instance and samples, measurable query
rules
\[
 x_t=X_t(U,Y_1,\ldots,Y_{t-1}),\qquad
 Y_t=g(x_t,Z_t),\quad 1\le t\le N,
\]
and an arbitrary measurable output rule
$\widehat x_N=X_{N+1}(U,Y_1,\ldots,Y_N)$.  Here
$Z_1,\ldots,Z_N$ are i.i.d. with law $\calP$.  There is one query per fresh
sample ($K=1$), which means the algorithm cannot evaluate the same $Z_t$ at multiple
points.

For a dimension-indexed family of algorithms, the worst-case expected
stationarity risk is the supremum over all finite dimensions, objectives in
\eqref{eq:function-class}, and oracles in Definition~\ref{def:oracle}. 

\begin{remark}[Exact scope]
\label{rem:scope}
Definition~\ref{def:oracle} imposes unbiasedness, joint measurability, and
uniform bounded error, but it does not require $g(x,z)=\nabla_x f(x,z)$ for a
sample loss $f$, nor continuity or conservativity of $g(\cdot,z)$.  The
oracle constructed below is generally discontinuous as a function of $x$.
Consequently, Theorem~\ref{thm:main} does not by itself settle
the samplewise-smooth, finite-sum, or conservative-oracle variants.
\end{remark}

\section{Main result}

\begin{theorem}[Bounded-noise minimax lower bound]
\label{thm:main}
Set
\begin{equation}
 c_0=2^{-20},
 \qquad
 c=\frac{1}{2560\cdot10^4\cdot2^{39}}.
 \label{eq:constants}
\end{equation}
For every $\Delta,L,\sigma>0$, every
\begin{equation}
 0<\eps\le c_0\min\{\sigma,\sqrt{\Delta L}\},
 \label{eq:accuracy-regime}
\end{equation}
and every positive integer
\begin{equation}
 N\le c\left(\frac{\Delta L}{\eps^2}
 +\frac{\Delta L\sigma^2}{\eps^4}\right),
 \label{eq:budget}
\end{equation}
there exists a finite dimension $d=d(\Delta,L,\sigma,\eps,N)$ such that the
following holds.  For every randomized adaptive algorithm on $\R^d$ making
at most $N$ oracle queries, there exist an
$F\in\calF_d(\Delta,L)$ and a fixed-law oracle satisfying
\eqref{eq:unbiased}--\eqref{eq:bounded-noise} for which
\begin{equation}
 \E_{U,Z_{1:N}}\|\nabla F(\widehat x_N)\|>\eps.
 \label{eq:failure}
\end{equation}
In particular, in the regime \eqref{eq:accuracy-regime}, the
dimension-free minimax query complexity is
\[
 \Omega\!\left(\frac{\Delta L}{\eps^2}
 +\frac{\Delta L\sigma^2}{\eps^4}\right).
\]
The explicit dimension is given in \eqref{eq:dimension} below.
\end{theorem}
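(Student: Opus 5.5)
We split the budget into its two terms. If $\Delta L/\eps^2 \ge \Delta L\sigma^2/\eps^4$, or more generally for the first term, we take the deterministic zero-chain of~\cite{CarmonEtAl2020} under a random orthogonal embedding. The oracle is noiseless, $g(x,z)=\nabla F(x)$, which trivially satisfies \eqref{eq:bounded-noise}. This gives failure whenever $N\lesssim \Delta L/\eps^2$. The substance is the $\sigma^2/\eps^4$ term; since $\eps\le c_0\sigma$, it dominates the other term up to a constant, so it suffices to treat it.

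\textbf{Construction.} We begin with the rescaled chain $\bar F_T(y)=\sum_{i}[\Psi(-y_{i-1})\Phi(-y_i)-\Psi(y_{i-1})\Phi(y_i)]$ on $T$ coordinates, scaled so that $F=\lambda^2 L\,\bar F_T(x/\lambda)$ with $\lambda\asymp\eps/L$ and $T\asymp \Delta L/\eps^2$. With these choices $F\in\calF_d(\Delta,L)$, and $\|\nabla F\|>\eps$ at every point where the chain progress (the largest index $i$ with $|y_i|\ge1/2$) is below $T$. Instead of assigning chain coordinate $i$ to a single direction, we assign it to a block of $m$ orthonormal directions $u_{i,1},\dots,u_{i,m}$ and an unknown sign vector $s_i\in\{\pm1\}^m$. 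Coordinate $i$ is encoded as $y_i=\langle x,\sum_j s_{ij}u_{ij}\rangle/\sqrt m$.

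\textbf{Oracle.} The gradient along block $i$ equals $\partial_i\bar F\cdot s_i/\sqrt m$, which has size $O(\eps)$. We realise it by outputting, on each direction $u_{ij}$, an independent bounded sign $\pm\sigma/\sqrt m$ whose bias reproduces the mean $\partial_i\bar F\, s_{ij}/\sqrt m$. The per-coordinate bias is $p\asymp \eps/\sigma$. This noise is injected only for the next undiscovered index $i=\mathrm{prog}(x)+1$; known coordinates are returned exactly. Every response then deviates from $\nabla F$ by exactly $\sigma$, so both \eqref{eq:unbiased} and \eqref{eq:bounded-noise} hold. Each query reveals at most $\mathrm{KL}\lesssim m\,p^2\cdot(\text{scale})\asymp \eps^2/\sigma^2$ nats about $s_i$, regardless of $m$. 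Concretely, the KL divergence between biased and unbiased Rademacher signs is $O(\text{bias}^2)$ per coordinate, and summing over $m$ coordinates gives $O(\eps^2/\sigma^2)$.

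We add the term $\mu\sum\log\cosh(\langle x,v\rangle/\mu)$ in the unused directions, or a global $\log\cosh(\|x\|)$-type penalty. This keeps the gradient large at large-norm points, so that an algorithm cannot make $\|\nabla F\|\le\eps$ by moving to a far-away region where random block projections vanish. Its Lipschitz gradient and value budget consume only a constant fraction of $L$ and $\Delta$.

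\textbf{Information argument.} To push $|y_{i}|\ge 1/2$, the algorithm must output $x$ with $\langle x, s_i\rangle$ correlated at order $\sqrt m$ with the unknown sign vector. Among bounded-norm $x$ (guaranteed by the penalty), this requires $\Omega(m)$ bits about $s_i$. A Fano or Pinsker argument, applied to the $m$ independent signs via a correlation bound, shows that obtaining a constant fraction of these bits needs $\Omega(\sigma^2/\eps^2)$ queries devoted to index $i$. The dimension-free scale-invariance in $m$ is the point of the construction. We choose $m$, and $d\approx Tm+$extra, large enough that random-rotation concentration handles the algorithm's ability to guess unrevealed directions, as in the Carmon et al.\ embedding. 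Chaining over $T$ indices with a martingale or union bound, progress reaches $T$ only if $N\gtrsim T\sigma^2/\eps^2\asymp \Delta L\sigma^2/\eps^4$. Otherwise, with probability at least $1/2$ the output has $\|\nabla F\|>2\eps$, which gives \eqref{eq:failure}.

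\textbf{Main obstacle.} The hardest step is the information argument. It must convert a per-query KL bound of order $\eps^2/\sigma^2$ into a bound on the correlation $\langle x,s_i\rangle/\sqrt m$ for \emph{adaptive} queries. It must also handle the discontinuity in the oracle at progress thresholds, and control the leakage caused by the algorithm querying points whose noisy index changes. I would first bound, for any fixed query sequence, the mutual information between $s_i$ and the transcript by a sum of per-query KLs (chain rule). I would then use a Gaussian-width or hypercontractive estimate to show that any estimator with $I\le \delta m$ achieves $\E\langle \hat s,s_i\rangle\le O(\sqrt{\delta})m$.
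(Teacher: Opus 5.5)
\emph{Verdict: genuine gap.} Your architecture is the paper's: hide each link in a sign vector, realize its $O(\eps)$ gradient as the mean of $\pm\sigma/\sqrt m$ signs with bias $\asymp\eps/\sigma$, charge $\Omega(m)$ nats per link, and add a $\log\cosh$ anti-escape term. The problem is your \emph{linear} code $y_i=\langle x,w_i\rangle$, $w_i=\sum_j s_{ij}u_{ij}/\sqrt m$, because nothing restricts the algorithm's \emph{queries} to bounded norm.

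With coordinate blocks, query the point whose $i$th block is $M(3^{-1},3^{-2},\dots,3^{-m})$ for every $i$. Then $|\langle x_i,s_i\rangle|> M/6$ for every sign pattern. So for $M>3\sqrt m\,\lambda$ every link is crossed, $\mathrm{prog}(x)=T$, and your oracle returns $\nabla F(x)$ with no noise. Its $i$th block is a nonzero multiple of $s_i$ (the incoming gate is active) plus a known penalty term, so one query reveals every sign vector. Random directions $u_{ij}$ do not help: a far query of generic direction still crosses every link with high probability and receives a noiseless, sign-dependent response. No per-query KL bound of order $m\eps^2/\sigma^2$ can hold at such queries.

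The penalty only changes which \emph{outputs} are near-stationary. A penalty confined to unused directions does not even do that, since the chain gradient tends to zero far out along block directions. In any case the penalty cannot justify ``among bounded-norm $x$'' for queries.

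The missing idea is a saturating code. The paper uses $q_j=\frac2D\sum_\ell\theta_{j\ell}\tanh(\sqrt D\,x_{j\ell}/\delta)$, so any query independent of $\theta_j$, of whatever norm, crosses link $j$ with probability at most $2e^{-D/128}$ (Lemma~\ref{lem:sign-correlation} with $V\in[-1,1]^D$). That one estimate controls both the skipping event $\mathcal J$ and crossings under the reference process. The $\log\cosh$ term is then needed exactly because saturation kills the chain gradient at infinity, and Lemma~\ref{lem:certificate} shows it keeps $\|\nabla F_\theta\|>2\eps$ at every unfinished point.

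A second leak sits in block $i-1$. Its derivative contains the outgoing term $\Psi'(\pm y_{i-1})\Phi(\pm y_i)$, which is nonzero once link $i-1$ is crossed and depends on $s_i$. You return that block exactly. So, with coordinate blocks, a query that crosses link $i-1$ and has $x_i=\eta(3^{-1},\dots,3^{-m})$, $\eta$ small, yields $\langle x_i,s_i\rangle$ exactly and hence all of $s_i$. After paying about $\sigma^2/\eps^2$ queries for link $1$, the algorithm advances one link per query. The paper prevents this by putting the backward difference $b_{j-1}(q)-b_{j-1}(q^{(j\leftarrow0)})$ into the randomized part $h_j$; this is why its noisy coordinates are $\mathcal I_j=\{j-1,j\}\times[D]$.

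Your KL bookkeeping is also inconsistent. Bias $\asymp\eps/\sigma$ on each of $m$ signs gives per-query KL $\asymp m\eps^2/\sigma^2$, not $\eps^2/\sigma^2$. Only its ratio to the $\Omega(m)$ nats needed is $m$-free; your stated bound would give $\Omega(m\sigma^2/\eps^2)$ queries per link, which is impossible.

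On the information step:
\begin{itemize}
\item At this scale Pinsker loses a factor $m$. What works is $\mathrm{kl}(u\|v)\ge u\log(1/v)-\log2$ against a fair-sign reference in which every query is independent of $s_i$. Then $v\le2(N+1)e^{-m/128}$, and the union over query times costs only $\log(N+1)\ll m$.
\item The hard part is making this rigorous when the number of responses charged to link $i$, and which link a response is charged to, depend on the hidden signs. Your sketch leaves this open. The paper handles it with three devices: extended response maps defined at every $(p,x)$; a comparison interaction in which progress rises by at most one off $\mathcal J$; and a fixed-length continuation with at most $n$ replaced laws.
\item With $\|R\|=\sigma$ the error $\|R-h\|$ can exceed $\sigma$; the paper uses radius $\sigma/2$.
\end{itemize}
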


\paragraph{Proof roadmap.}
An algorithm using
fewer than $N$ queries may be padded by ignored queries, so we analyze exactly
$N$ response rounds.  The proof has five steps.

\begin{enumerate}[leftmargin=*,itemsep=4pt]
\item Section~\ref{sec:chain} recalls a smooth scalar chain of length
\[
T\asymp \frac{\Delta L}{\eps^2}.
\]
Lemma~\ref{lem:chain} says that if the first unfinished coordinate is $j$,
then the $j$th scalar derivative has magnitude at least $2$.  It also gives
the nearest-neighbor dependence later used to hide one link at a time.

\item Section~\ref{sec:embedding} replaces each scalar coordinate by its
correlation with an unknown sign vector $\Theta_j\in\{-1,1\}^D$.  The
$\log\cosh$ term in \eqref{eq:Ftheta} prevents the derivative of the
$\tanh$ encoding from vanishing at large queries.  Lemmas
\ref{lem:objective} and \ref{lem:certificate} show, respectively, that the
resulting objective belongs to $\calF_d(\Delta,L)$ and that a point with
gradient norm at most $2\eps$ must have completed every encoded link.

\item Section~\ref{sec:oracle} splits the gradient at an unfinished point
into a part determined by already encountered sign vectors and a small
remaining vector that depends on the next sign vector.  It realizes the
remaining vector as the mean of bounded random signs.  Lemma
\ref{lem:oracle} proves uniform boundedness and unbiasedness.  A single
response has KL divergence at most $KD\eps^2/\sigma^2$ from the same
response with unbiased signs.

\item Section~\ref{sec:information} draws all sign vectors independently.
An independent query has probability at most $2e^{-D/128}$ of having enough
correlation to cross a new link, while the responses collected before that
crossing reveal limited information about the relevant sign vector.  A
fixed-length comparison argument makes this statement valid for adaptive
queries.  It shows that crossing one link with appreciable probability
requires
\[
n_0\asymp \frac{\sigma^2}{\eps^2}
\]
responses whose laws depend on that link.

\item The responses charged to different links are disjoint.  Hence fewer
than $Tn_0/2$ total queries complete all $T$ links with probability at most
$9/128$.  Lemma~\ref{lem:certificate} then gives the desired lower bound on
the expected gradient norm.  Section~\ref{sec:parameters} verifies the
constants and fixes one deterministic choice of the random sign vectors.
\end{enumerate}

The first step follows the deterministic zero-chain construction of Carmon
et al.~\cite{CarmonEtAl2020}.  The new ingredient relative to the
bounded-variance lower bound of Arjevani et al.~\cite{ArjevaniEtAl2023} is
Step 3: their rare unbiased reveal must have large amplitude, whereas the
biased-sign construction keeps every realization of the oracle error bounded and hides the next
link through small KL information instead.

\section{The hard scalar chain}
\label{sec:chain}

This section defines the scalar chain used as the base of the construction.
The chain has many coordinates, but its local form allows information to move
from coordinate $j$ only to coordinate $j+1$.  We use three consequences:
the chain has gap $O(T)$, has uniformly bounded first and second derivatives,
and has a nonzero derivative at its first unfinished coordinate.

Define
\[
\Psi(s)=
\begin{cases}
0,&s\le \tfrac12,\\
\exp\!\left(1-(2s-1)^{-2}\right),&s>\tfrac12,
\end{cases}
\qquad
\Phi(s)=\sqrt e\int_{-\infty}^s e^{-t^2/2}\,dt .
\]
For $s\in\R^T$, let
\begin{align}
\bar f_T(s)
&=-\Phi(s_1)+\sum_{i=2}^T
\bigl[\Psi(-s_{i-1})\Phi(-s_i)-\Psi(s_{i-1})\Phi(s_i)\bigr],
\nonumber\\
f_T(q)&=\bar f_T(2q).
\label{eq:chain}
\end{align}

\begin{lemma}[Chain bounds]
\label{lem:chain}
For every $T\ge1$,
\begin{equation}
 f_T(0)-\inf f_T\le20T,
 \qquad
 \|\nabla f_T(q)\|_\infty\le G:=64,
 \qquad
 \operatorname{Lip}(\nabla f_T)\le\ell:=2400.
 \label{eq:chain-bounds}
\end{equation}
If $j$ is the least index satisfying $|q_j|\le1/2$, then
\begin{equation}
 b_j(q):=\partial_j f_T(q)\le-2.
 \label{eq:chain-slope}
\end{equation}
If
$p=\max(\{i:|q_i|>1/4\}\cup\{0\})$, then $b_i(q)=0$ for every
$i\ge p+2$.  If $p<T$, the dependence of
$b_1,\ldots,b_{p+1}$ on $q_{p+1}$ occurs only in $b_p,b_{p+1}$,
with the first item omitted when $p=0$.
\end{lemma}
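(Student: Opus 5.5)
We reduce everything to the standard one-dimensional facts about $\Psi$ and $\Phi$ from Carmon et al.~\cite{CarmonEtAl2020}. These are: $0\le\Psi\le e$; $0\le\Phi\le\sqrt{2\pi e}$; $0\le\Psi'\le\sqrt{54/e}$; $0<\Phi'\le\sqrt e$; bounded second derivatives $|\Psi''|\le 32.5$ and $|\Phi''|\le 1$; $\Psi(t)=\Psi'(t)=0$ for $t\le1/2$; and for $t>1$ both $\Psi(t)>1$ and $\Phi'(t)>1$. We also use that $\Phi'(u)=\sqrt e\,e^{-u^2/2}\ge1$ for $|u|\le1$.

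\textbf{Gap.} Since $\bar f_T(0)=-\Phi(0)-\sum_{i\ge2}\Psi(0)\Phi(0)=-\Phi(0)$, we have $f_T(0)\le0$. Each summand is at least $-\Psi(s_{i-1})\Phi(s_i)\ge -e\sqrt{2\pi e}\approx-11.2$, and $-\Phi(s_1)\ge-\sqrt{2\pi e}$. Together these give $f_T(0)-\inf f_T\le 12T\le20T$.

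\textbf{Gradient and Hessian bounds.} We compute $\partial_j\bar f_T$. It receives the term $-\Psi(-s_{j-1})\Phi'(-s_j)-\Psi(s_{j-1})\Phi'(s_j)$ from the $i=j$ summand (or $-\Phi'(s_1)$ when $j=1$). It also receives $-\Psi'(-s_j)\Phi(-s_{j+1})-\Psi'(s_j)\Phi(s_{j+1})$ from the $i=j+1$ summand. At most one of $\Psi(\pm s)$ is nonzero, and the same holds for $\Psi'(\pm s)$. Hence $|\partial_j\bar f_T|\le e\sqrt e+\sqrt{54/e}\sqrt{2\pi e}\le 23$, and the chain rule gives $|\partial_jf_T|\le46\le64$. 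The Hessian of $\bar f_T$ is tridiagonal, and each entry is bounded by products of the one-dimensional bounds, up to a few hundred. The operator norm of a tridiagonal matrix is at most the maximal row $\ell_1$ sum (Gershgorin/Schur). The factor $4$ from $q\mapsto2q$ then gives $\ell\le2400$. This is routine bookkeeping with the constants above.

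\textbf{Slope at the first unfinished coordinate.} Set $s=2q$, so $|s_j|\le1$ and $|s_i|>1$ for all $i<j$. For $j=1$, $\partial_1\bar f_T$ contains $-\Phi'(s_1)\le-1$. The second contribution $-\Psi'(\pm s_1)\Phi(\cdot)$ is $\le0$. For $j\ge2$, we have $|s_{j-1}|>1$, so one of $\Psi(\pm s_{j-1})$ exceeds $1$. The corresponding term is $-\Psi(\pm s_{j-1})\Phi'(\pm s_j)<-1$, since $\Phi'$ is even and $\ge1$ on $[-1,1]$. All other contributions are nonpositive. Thus $\partial_j\bar f_T<-1$, and $\partial_jf_T=2\partial_j\bar f_T\le-2$. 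This sign bookkeeping is the main point to check, namely that every term in $\partial_j\bar f_T$ is nonpositive. It holds because $\Psi,\Phi,\Psi',\Phi'\ge0$ and every term carries a minus sign.

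\textbf{Locality.} Let $p$ be as defined. For $i>p$ we have $|q_i|\le1/4$, so $|s_i|\le1/2$ and $\Psi(\pm s_i)=\Psi'(\pm s_i)=0$. Take $i\ge p+2$. Then $\partial_i\bar f_T$ is built from $\Psi(\pm s_{i-1})$ and $\Psi'(\pm s_i)$, with $i-1\ge p+1$ and $i\ge p+1$, so all these factors vanish and $b_i=0$. For the dependence on $q_{p+1}$, the variable $s_{p+1}$ enters $\partial_i\bar f_T$ only for $i\in\{p,p+1,p+2\}$. The case $i=p+2$ is already zero. For $i=p+1$ it enters through $\Phi'(\pm s_{p+1})$ and $\Psi'(\pm s_{p+1})$, and for $i=p$ through $\Phi(\pm s_{p+1})$. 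Derivatives $b_i$ with $i<p$ involve only $s_{i-1},s_i,s_{i+1}$, none of which is $s_{p+1}$. When $p=0$, only $b_1$ remains, as claimed.
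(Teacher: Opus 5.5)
Your proposal is correct and follows essentially the same route as the paper: termwise sign and magnitude bounds on the first-derivative formula, a row-sum bound on the tridiagonal Hessian rescaled by $4$, a termwise lower bound for the gap, and nearest-neighbor locality. The only differences are that you import Carmon et al.'s sharper one-dimensional constants (giving $12T$, $46$, and row sums at most $152$, hence $\ell\le 608$) where the paper derives cruder constants inline, and one small slip that does not affect the proof: $\Phi'(t)>1$ fails for $t>1$, since there $\Phi'(t)=\sqrt e\,e^{-t^2/2}<1$, but your slope argument correctly uses only $\Phi'\ge1$ on $[-1,1]$.
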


\begin{proof}
Put $u=(2s-1)^{-2}$ for $s>1/2$.  Direct differentiation gives
\[
 \Psi'(s)=4e e^{-u}u^{3/2},
 \qquad
 \Psi''(s)=8e e^{-u}u^2(2u-3).
\]
Every right derivative is $e e^{-u}$ times a polynomial in $\sqrt u$, and
$u^m e^{-u}\to0$ as $u\to\infty$.  Thus all right derivatives tend to zero
at the cutoff and the flat extension is smooth.  Using
$\max_{u\ge0}u^ke^{-u}=k^ke^{-k}$,
\[
 \Psi'\le3\sqrt{6/e}<5,
 \qquad
 |\Psi''|\le16eu^3e^{-u}+24eu^2e^{-u}
 \le\frac{432}{e^2}+\frac{96}{e}<100.
\]
Consequently,
\begin{equation}
 0\le\Psi<3,
 \qquad 0\le\Psi'<5,
 \qquad |\Psi''|<100.
 \label{eq:Psi-bounds}
\end{equation}
Also
\begin{equation}
 0<\Phi<\sqrt{2\pi e}<5,
 \qquad 0<\Phi'<2,
 \qquad |\Phi''|\le1.
 \label{eq:Phi-bounds}
\end{equation}

For any scalar $s$, at most one of $\Psi^{(k)}(s)$ and
$\Psi^{(k)}(-s)$ is nonzero for $k=0,1,2$.  Differentiation gives
\begin{align}
\partial_j\bar f_T(s)
={}&-\ind_{\{j=1\}}\Phi'(s_1)\nonumber\\
&-\ind_{\{j\ge2\}}
\bigl[\Psi(-s_{j-1})\Phi'(-s_j)+\Psi(s_{j-1})\Phi'(s_j)\bigr]
\nonumber\\
&-\ind_{\{j<T\}}
\bigl[\Psi'(-s_j)\Phi(-s_{j+1})+\Psi'(s_j)\Phi(s_{j+1})\bigr].
\label{eq:first-derivative-chain}
\end{align}
Every displayed term is nonpositive.  An incoming bracket has magnitude at
most $6$, an outgoing bracket at most $25$, and the root term at most $2$.
Hence $\|\nabla\bar f_T\|_\infty\le31$ and
$\|\nabla f_T\|_\infty\le62<64$.

The Hessian of $\bar f_T$ is tridiagonal.  Its diagonal entries are
\begin{align*}
\partial_{jj}^2\bar f_T
={}&-\ind_{\{j=1\}}\Phi''(s_1)\\
&+\ind_{\{j\ge2\}}
\bigl[\Psi(-s_{j-1})\Phi''(-s_j)-\Psi(s_{j-1})\Phi''(s_j)\bigr]\\
&+\ind_{\{j<T\}}
\bigl[\Psi''(-s_j)\Phi(-s_{j+1})-\Psi''(s_j)\Phi(s_{j+1})\bigr].
\end{align*}
Its adjacent entries have the form
\[
 \Psi'(-s_j)\Phi'(-s_{j+1})-\Psi'(s_j)\Phi'(s_{j+1}),
\]
and all other off-diagonal entries vanish.  The root, incoming, and outgoing
diagonal contributions are bounded by $1$, $3$, and $500$; the root and
incoming terms never occur together.  An adjacent entry has magnitude at
most $10$.  Thus every diagonal entry has magnitude at most $503$, every
absolute row sum is at most $523$, and
$\|\nabla^2\bar f_T\|_{\op}\le523$.  The substitution $s=2q$ multiplies
the Hessian by four, so $\|\nabla^2f_T\|_{\op}\le2092<2400$.

For the gap, at most one summand in each bracket of \eqref{eq:chain} is
nonzero, and each product has magnitude at most $3\cdot5=15$.  Therefore
$\bar f_T(s)\ge-5-15(T-1)$.  Since $\bar f_T(0)=-\Phi(0)<0$ and the scaling
$s=2q$ is onto,
\[
 f_T(0)-\inf f_T\le5+15(T-1)\le15T<20T.
\]

For \eqref{eq:chain-slope}, if $j=1$, the root term gives
$-2\Phi'(2q_1)\le-2$.  If $j>1$, then $|2q_{j-1}|>1$.  Exactly one
incoming term is active, its gate is at least $\Psi(1)=1$, and
$\Phi'(2q_j)=\sqrt e e^{-2q_j^2}\ge1$.  After the factor two from
$f_T(q)=\bar f_T(2q)$, this term is at most $-2$.  All outgoing terms have
the same sign.

Finally, if $i\ge p+2$, then $|q_{i-1}|$ and $|q_i|$ are at most $1/4$,
so every incoming and outgoing term in $b_i$ vanishes.  The nearest-neighbor
form of \eqref{eq:chain} also shows that $q_{p+1}$ can enter only
$b_p,b_{p+1}$.  This proves the lemma.
\end{proof}

The scalar chain alone is hard only when its coordinate system cannot be
guessed in advance.  The next section hides each scalar coordinate in a
random sign block and transfers the derivative lower bound above back to the
original variable $x$.

\section{Encoding the chain and preventing escape}
\label{sec:embedding}

We now construct the objective used in the theorem. In  Equation
\eqref{eq:q-code} below, we encode scalar coordinate $q_j$ as a correlation between a
query block and an unknown sign vector $\theta_j$.  The first term of
\eqref{eq:Ftheta} then places the scalar chain on these encoded coordinates, while the
second term is needed because the derivative of $\tanh$ approaches zero for
large inputs; without this term, an algorithm could make the encoded chain
appear stationary simply by taking $\|x\|$ large.

Fix
\begin{equation}
 C=10000,
 \qquad A=\frac{16C\eps^2}{L},
 \qquad \delta=\frac{A}{4\eps}=\frac{4C\eps}{L},
 \qquad T=\left\lfloor\frac{\Delta L}{320C\eps^2}\right\rfloor.
 \label{eq:ATdelta}
\end{equation}
Condition \eqref{eq:accuracy-regime} will imply $T\ge1$.  Let $D\ge1$ be an
integer, and let $\theta=(\theta_1,\ldots,\theta_T)$, where
$\theta_j\in\{-1,1\}^D$.  Write
$x=(x_1,\ldots,x_T)\in(\R^D)^T$, set
\begin{equation}
 y_{j\ell}=\frac{\sqrt D\,x_{j\ell}}{\delta},
 \qquad
 q_j(x)=\frac2D\sum_{\ell=1}^D\theta_{j\ell}\tanh y_{j\ell},
 \label{eq:q-code}
\end{equation}
and define
\begin{equation}
 F_\theta(x)=A f_T(q(x))
 +\frac{8A}{D}\sum_{j=1}^T\sum_{\ell=1}^D\log\cosh y_{j\ell}.
 \label{eq:Ftheta}
\end{equation}

\begin{lemma}[Objective bounds]
\label{lem:objective}
The function $F_\theta$ is continuously differentiable, bounded below,
globally $L$-smooth, and satisfies
\begin{equation}
 F_\theta(0)-\inf F_\theta\le\Delta.
 \label{eq:objective-gap}
\end{equation}
\end{lemma}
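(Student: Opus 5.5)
Since the objective splits as $F_\theta = A f_T(q(x)) + R(x)$ with $R(x)=\frac{8A}{D}\sum_{j,\ell}\log\cosh y_{j\ell}$, the plan is to prove the gap bound, smoothness, and $L$-smoothness for each term separately. Continuous differentiability is immediate: $f_T$ is $C^1$ (indeed $C^2$ by Lemma~\ref{lem:chain}), while $\tanh$ and $\log\cosh$ are smooth. For the gap, I use $q(0)=0$ and $R(0)=0$, which give $F_\theta(0)=Af_T(0)$. Since $R\ge0$, $\inf F_\theta\ge A\inf f_T$. Lemma~\ref{lem:chain} then yields
\[
F_\theta(0)-\inf F_\theta\le 20AT\le 20\cdot\frac{16C\eps^2}{L}\cdot\frac{\Delta L}{320C\eps^2}=\Delta .
\]
This also shows that $F_\theta$ is bounded below.

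For $L$-smoothness, I bound the Hessian operator norm by $L$. The regularizer is separable. Writing $\partial y_{j\ell}/\partial x_{j\ell}=\sqrt D/\delta$, its Hessian is diagonal with entries $\frac{8A}{D}\cdot\frac{D}{\delta^2}\sech^2 y_{j\ell}\le 8A/\delta^2$. With $\delta=A/(4\eps)$ we get $8A/\delta^2=128\eps^2/A=128\eps^2 L/(16C\eps^2)=8L/C$, which is tiny.

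For the chain term, the Hessian of $Af_T\circ q$ equals $A\,J^\top\nabla^2 f_T\,J+A\sum_j b_j(q)\nabla^2 q_j$, where $J$ is the Jacobian of $q$. Because $q_j$ depends only on block $x_j$, $J$ is block diagonal with rows $\nabla_{x_j}q_j=\frac{2}{\sqrt D\,\delta}(\theta_{j\ell}\sech^2 y_{j\ell})_\ell$. Each row has norm at most $\frac{2}{\sqrt D\delta}\sqrt D=2/\delta$, and the rows have disjoint supports, so $\|J\|_{\op}\le 2/\delta$. The first term is therefore at most $A\ell\cdot4/\delta^2=4\ell\cdot16\eps^2/A=64\ell L/(16C)=4\ell L/C=0.96L$ with $\ell=2400$. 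That is too close to $L$, so the constants need care.

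The second term is block diagonal. Its block $j$ is diagonal with entries $A b_j\cdot\frac{2}{D}\theta_{j\ell}\frac{D}{\delta^2}(-2\sech^2y\tanh y)$, whose magnitude is at most $A G\cdot\frac{4}{\delta^2}\cdot\frac{4}{3\sqrt3}\le 0.77\cdot 4AG/\delta^2=0.77\cdot 4GL/C\approx0.02L$.

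The main obstacle is fitting the sum of these three contributions under $L$. The naive total is $(4\ell+4\cdot0.77\,G+8)L/C\approx(9600+197+8)L/10^4<L$, which just works. So I will use the bounds from Lemma~\ref{lem:chain} as stated, in particular $\ell=2400$, rather than the sharper internal $2092$. Two technicalities need checking: $f_T$ is only $C^{1,1}$/$C^2$ as established there, and global $L$-smoothness follows from the $\op$-norm Hessian bound via the mean value theorem along segments. Summing the three pieces gives $\|\nabla^2F_\theta\|_{\op}\le L$ everywhere, hence $\nabla F_\theta$ is $L$-Lipschitz.
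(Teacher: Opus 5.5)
Your proposal is correct and follows essentially the same route as the paper. For the gap, you use $q(0)=0$, nonnegativity of the $\log\cosh$ term, and $20AT\le\Delta$; for $L$-smoothness, you use the same three-term Hessian decomposition with bounds $4A\ell/\delta^2$, $4AG/\delta^2$, $8A/\delta^2$ and $A/\delta^2=L/C$. One harmless slip: the maximum of $\sech^2 y\,|\tanh y|$ is $\frac{2}{3\sqrt3}$, not $\frac{4}{3\sqrt3}$, but your value overestimates it, and even the crude bound $1$ used in the paper gives $(4\ell+4G+8)/C=0.9864\le 1$.
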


\begin{proof}
The derivatives of \eqref{eq:q-code} are
\begin{equation}
 \partial_{x_{j\ell}}q_j
 =\frac{2\theta_{j\ell}}{\delta\sqrt D}\sech^2y_{j\ell},
 \qquad
 \partial^2_{x_{j\ell}}q_j
 =-\frac4{\delta^2}\theta_{j\ell}\tanh y_{j\ell}\sech^2y_{j\ell}.
 \label{eq:q-derivatives}
\end{equation}
Thus $\|Dq\|_{\op}\le2/\delta$ and
$\|\nabla^2q_j\|_{\op}\le4/\delta^2$.  The latter Hessians occupy disjoint
blocks.  The Hessian chain rule, including the added $\log\cosh$ term, gives
\[
 \nabla^2F_\theta
 =A Dq^\top\nabla^2f_T(q)Dq
 +A\sum_{j=1}^Tb_j(q)\nabla^2q_j
 +\frac{8A}{\delta^2}\operatorname{diag}(\sech^2y_{j\ell}).
\]
The first term has norm at most $4A\ell/\delta^2$, the second at most
$4AG/\delta^2$, and the last at most $8A/\delta^2$.  Lemma~\ref{lem:chain}
therefore implies
\begin{equation}
 \operatorname{Lip}(\nabla F_\theta)
 \le\frac{A}{\delta^2}(4\ell+4G+8)
 =\frac{L}{C}(4\ell+4G+8)\le L.
 \label{eq:smoothness}
\end{equation}
The scalar chain is bounded below and the added term is nonnegative, so
$F_\theta$ is bounded below.  Since $q(0)=0$ and this term vanishes at zero,
\[
 F_\theta(0)-\inf F_\theta
 \le A(f_T(0)-\inf f_T)
 \le20AT
 \le20\frac{16C\eps^2}{L}\frac{\Delta L}{320C\eps^2}
 =\Delta.
\]
\end{proof}

We next verify that the added $\log\cosh$ term does more than preserve
smoothness: together with the chain derivative, it keeps the gradient large
whenever any encoded coordinate is unfinished, including at points of
arbitrarily large norm.

\begin{lemma}[An unfinished coordinate forces a large gradient]
\label{lem:certificate}
If some coordinate obeys $|q_j(x)|\le1/2$, then
\begin{equation}
 \|\nabla F_\theta(x)\|>2\eps.
 \label{eq:certificate}
\end{equation}
\end{lemma}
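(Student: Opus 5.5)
The plan is to look only at the gradient block belonging to the \emph{first} unfinished coordinate and to reduce the claim to a one-dimensional inequality. Let $j$ be the least index with $|q_j(x)|\le 1/2$. By Lemma~\ref{lem:chain}, $b:=b_j(q(x))\le-2$. Differentiating \eqref{eq:Ftheta} with \eqref{eq:q-derivatives} gives, for each $\ell\in[D]$,
\[
 \partial_{x_{j\ell}}F_\theta(x)
 =\frac{2A}{\delta\sqrt D}\Bigl[b\,\theta_{j\ell}\sech^2 y_{j\ell}+4\tanh y_{j\ell}\Bigr].
\]
Set $s_\ell=\theta_{j\ell}\tanh y_{j\ell}\in(-1,1)$. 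Since $\theta_{j\ell}^2=1$ and $\sech^2=1-\tanh^2$, this equals $\theta_{j\ell}\,\frac{2A}{\delta\sqrt D}\,h_b(s_\ell)$, where
\[
 h_b(s)=b(1-s^2)+4s .
\]
The encoding \eqref{eq:q-code} gives $\frac1D\sum_\ell s_\ell=q_j/2\in[-1/4,1/4]$.

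We then bound the norm from below. Restricting to block $j$ and using Cauchy--Schwarz,
\[
 \|\nabla F_\theta(x)\|\ \ge\ \Bigl(\sum_\ell(\partial_{x_{j\ell}}F_\theta)^2\Bigr)^{1/2}
 \ \ge\ \frac{2A}{\delta\sqrt D}\cdot\frac{1}{\sqrt D}\sum_\ell|h_b(s_\ell)|
 \ =\ 8\eps\cdot\frac1D\sum_\ell|h_b(s_\ell)|.
\]
Here we used $2A/\delta=8\eps$ from \eqref{eq:ATdelta}. It therefore suffices to show that the average of $|h_b(s_\ell)|$ exceeds $1/4$.

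The main obstacle is that $h_b$ has a root $s^*\in(0,1)$. Individual coordinates can therefore make the gradient vanish, and a plain signed sum can cancel, since values near $s=\pm1$ give $h\approx\pm4$. The fix is to use a linear minorant of $|h_b|$ on $[-1,1]$ and exploit the constraint on the mean. First I would reduce to $b=-2$. Because $h_b(s)=h_{-2}(s)+(b+2)(1-s^2)\le h_{-2}(s)$, we get $|h_b(s)|\ge|h_{-2}(s)|$ wherever $h_{-2}(s)\le0$, which is the set $s\le\sqrt2-1$ inside $[-1,1]$. On that interval, $-h_{-2}(s)=2-4s-2s^2$ is concave. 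It therefore lies above its chord from $(-1,4)$ to $(\sqrt2-1,0)$, namely the line
\[
 \lambda(s)=\frac{4(\sqrt2-1-s)}{\sqrt2}.
\]
For $s>\sqrt2-1$ we have $\lambda(s)<0\le|h_b(s)|$. Hence $|h_b(s)|\ge\lambda(s)$ for all $s\in[-1,1]$ and all $b\le-2$.

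Averaging the affine function $\lambda$ and using $\frac1D\sum_\ell s_\ell\le1/4$ gives
\[
 \frac1D\sum_\ell|h_b(s_\ell)|\ \ge\ \frac{4}{\sqrt2}\Bigl(\sqrt2-1-\tfrac14\Bigr)\ \approx\ 0.46>\tfrac14 .
\]
This yields $\|\nabla F_\theta(x)\|\ge 8\eps\cdot0.46>2\eps$, which is \eqref{eq:certificate}. The argument holds for every $x$, however large its norm. This is exactly where the $\log\cosh$ term enters: it supplies the $4\tanh y_{j\ell}$ contribution that stays large even when $\sech^2 y_{j\ell}\to0$.
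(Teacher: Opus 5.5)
Your proof is correct, and it follows the paper's overall strategy. You restrict to the gradient block of the least unfinished index $j$ and write each coordinate as $\theta_{j\ell}$ times a scalar function of $s_\ell=\theta_{j\ell}\tanh y_{j\ell}$; your $h_b$ is the paper's $z_b/2$. You then bound that function below by something linear and use $\frac1D\sum_\ell s_\ell=q_j/2$.

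The key inequality is where you differ. The paper works with general $b$ satisfying $|b|\ge2$. It computes the root $r_b$ explicitly and shows $|r_b|\ge\sqrt2-1$ via the monotone function $m(t)$. It proves the V-shaped bound $|z_b(r)|\ge4|r-r_b|$ by factorization, and then passes from the root-mean-square of $z_b(r_\ell)$ to $\bigl|\frac1D\sum_\ell r_\ell-r_b\bigr|$ by Jensen.

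You instead use the sign $b\le-2$ to get the pointwise domination $h_b\le h_{-2}$, which reduces everything to the single quadratic $h_{-2}$. You then take a one-sided affine chord minorant valid on all of $[-1,1]$ and pair it with $\|\cdot\|_2\ge D^{-1/2}\|\cdot\|_1$. This avoids root formulas for general $b$ and the absolute-value step, and needs only the upper bound $\frac1D\sum_\ell s_\ell\le1/4$. It also gives a better constant: $\|\nabla F_\theta(x)\|\ge(32-20\sqrt2)\eps\approx3.7\eps$, versus the paper's $(16\sqrt2-20)\eps\approx2.6\eps$.

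In exchange, the paper's version uses only $|b|\ge2$ and is symmetric in the sign of $b$. It would therefore survive without the sign information in \eqref{eq:chain-slope}, whereas your reduction relies on it.
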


\begin{proof}
Take the least such $j$ and let $b=b_j(q(x))$.  Lemma~\ref{lem:chain} gives
$|b|\ge2$.  The $j$th block gradient is
\begin{equation}
 \nabla_{x_j}F_\theta(x)
 =\frac{A}{\delta\sqrt D}
 \left[2b\theta_{j\ell}\sech^2y_{j\ell}+8\tanh y_{j\ell}
 \right]_{\ell=1}^D.
 \label{eq:block-gradient}
\end{equation}
Let $r_\ell=\theta_{j\ell}\tanh y_{j\ell}$ and
$z_b(r)=2b(1-r^2)+8r$. Since $\theta_{j\ell}^2=1$, the bracket in \eqref{eq:block-gradient} equals
$\theta_{j\ell}z_b(r_\ell)$.  For $b>0$, the roots are
\[
 r_b=\frac{2-\sqrt{b^2+4}}b,
 \qquad
 R_b=\frac{2+\sqrt{b^2+4}}b.
\]
Here $r_b\in(-1,0)$ and $R_b>1$.  The case $b<0$ follows from
$z_b(r)=-z_{|b|}(-r)$.  Thus $z_b$ has one root in $[-1,1]$, and
\[
 m(t)=\frac{t}{\sqrt{t^2+4}+2},
 \qquad
 m'(t)=\frac{2+4/\sqrt{t^2+4}}{(\sqrt{t^2+4}+2)^2}>0.
\]
Consequently,
\begin{equation}
 |r_b|=\frac{|b|}{\sqrt{b^2+4}+2}=m(|b|)\ge m(2)=\sqrt2-1.
 \label{eq:root-lower}
\end{equation}
For $b>0$, factorization gives, for $-1\le r\le1$ and $r\ne r_b$,
\[
 \frac{|z_b(r)|}{|r-r_b|}
 =2b(R_b-r)\ge2b(R_b-1)
 =2(2+\sqrt{b^2+4}-b)\ge4.
\]
Symmetry handles $b<0$, and therefore
\begin{equation}
 |z_b(r)|\ge4|r-r_b|,
 \qquad -1\le r\le1.
 \label{eq:z-lower}
\end{equation}
Since $D^{-1}\sum_\ell r_\ell=q_j/2$ by definition, its absolute value is at most $1/4$.
Jensen's inequality and \eqref{eq:root-lower}--\eqref{eq:z-lower} yield
\begin{align}
\left(\frac1D\sum_{\ell=1}^Dz_b(r_\ell)^2\right)^{1/2}
&\ge4\left(\frac1D\sum_{\ell=1}^D(r_\ell-r_b)^2\right)^{1/2}
\nonumber\\
&\ge4\left|\frac1D\sum_{\ell=1}^Dr_\ell-r_b\right|
\ge4\sqrt2-5>\frac12.
\label{eq:block-lower}
\end{align}
Now $A/\delta=4\eps$.  Combining \eqref{eq:block-gradient} and
\eqref{eq:block-lower} proves \eqref{eq:certificate}.
\end{proof}

Lemma~\ref{lem:certificate} reduces
optimization to a sequential task: a small-gradient output must cross every
one of the $T$ encoded links.  It remains to construct a bounded oracle that
makes learning the next sign vector slow.

\section{A uniformly bounded oracle with fixed law}
\label{sec:oracle}

Fix a query $x$ whose last crossed coordinate is $p(x)$, as defined below in \eqref{eq:last-crossed}.
The next unresolved sign vector is then $\theta_{p(x)+1}$.  The purpose of
this section is to write
\[
\begin{aligned}
\nabla F_\theta(x)
&=\text{a vector determined by earlier sign vectors}\\
&\quad+\text{a small vector depending on the next sign vector}.
\end{aligned}
\]
and to construct an unbiased bounded estimate of the second vector.  The
decomposition is established in \eqref{eq:B-plus-h}; the bounded variable is defined in \eqref{eq:bounded-response}.

Write the block gradient as $\nabla_iF_\theta=G_i+C_i$, where
\begin{equation}
 G_i=\frac{8A}{\delta\sqrt D}
 [\tanh y_{i\ell}]_{\ell=1}^D,
 \qquad
 C_i=\frac{2A}{\delta\sqrt D}b_i(q)
 [\theta_{i\ell}\sech^2y_{i\ell}]_{\ell=1}^D.
 \label{eq:gradient-decomposition}
\end{equation}
Let $\iota_i:\R^D\to\R^{TD}$ be the canonical embedding into block $i$,
and set $\calG=(G_1,\ldots,G_T)$.  The vectors $C_i$ and their variants
remain $D$-dimensional block vectors; we apply $\iota_i$ explicitly whenever
they enter a vector in $\R^{TD}$.

Let
\begin{equation}
 p(x)=\max\bigl(\{i:|q_i(x)|>1/4\}\cup\{0\}\bigr).
 \label{eq:last-crossed}
\end{equation}
If $p(x)=T$, define $g_\theta(x,z)=\nabla F_\theta(x)$.  Suppose
$p(x)<T$, and put $j=p(x)+1$.  For $j\ge2$, define
$q^{(j\leftarrow0)}(x)\in\R^T$ coordinatewise by
\[
 \bigl(q^{(j\leftarrow0)}(x)\bigr)_k
 =
 \begin{cases}
  0, & k=j,\\
  q_k(x), & k\ne j.
 \end{cases}
\]
Thus $q^{(j\leftarrow0)}(x)$ is obtained from $q(x)$ by replacing only
its $j$th coordinate by zero.  Define the $D$-dimensional block vector
\[
 \widetilde C_{j-1}^{(j)}(x)
 =
 \frac{2A}{\delta\sqrt D}
 b_{j-1}\!\left(q^{(j\leftarrow0)}(x)\right)
 \left(
 \theta_{j-1}\od\sech^2 y_{j-1}
 \right).
\]
Now set
\begin{align}
 B_j
 &=
 \calG
 +\sum_{i\le j-2}\iota_i(C_i)
 +\ind_{\{j\ge2\}}
 \iota_{j-1}\!\left(\widetilde C_{j-1}^{(j)}\right),
 \label{eq:Bj}\\
 h_j
 &=
 \ind_{\{j\ge2\}}
 \iota_{j-1}\!\left(
 C_{j-1}-\widetilde C_{j-1}^{(j)}
 \right)
 +\iota_j(C_j).
 \label{eq:hj}
\end{align}
Here $\calG$ contains the full contribution of the $\log\cosh$ term.
The vector $B_j$ is determined by
$\theta_1,\ldots,\theta_{j-1}$, while $h_j$ is the remaining part of
the gradient that may depend on $\theta_j$.

To verify this decomposition, suppose $p=j-1$.  Then every $q_i$ with
$i\ge j$ satisfies $|q_i|\le1/4$, and hence
$\Psi(\pm2q_i)=\Psi'(\pm2q_i)=0$.  The nearest-neighbor property in
Lemma~\ref{lem:chain} gives $C_i=0$ for $i\ge j+1$, removes
$q_{j+1}$ from $b_j$, and shows that only $C_{j-1}$ among the preceding
blocks can depend on $q_j$.  For $j=1$, the outgoing part of $b_1$
vanishes; for $j=T$, there is no outgoing term.  Consequently,
\begin{equation}
 \nabla F_\theta(x)=B_j(x)+h_j(x).
 \label{eq:B-plus-h}
\end{equation}

Moreover, $h_j$ is supported on block $1$ when $j=1$ and on blocks
$j-1,j$ when $j\ge2$.  Its nonzero blocks are
\begin{align*}
 (h_j)_{j-1}
 &=
 \frac{2A}{\delta\sqrt D}
 \left[
 b_{j-1}(q(x))
 -
 b_{j-1}\!\left(q^{(j\leftarrow0)}(x)\right)
 \right]
 \left(
 \theta_{j-1}\od\sech^2 y_{j-1}
 \right),
 \qquad j\ge2,\\
 (h_j)_j
 &=
 \frac{2A}{\delta\sqrt D}
 b_j(q(x))
 \left(
 \theta_j\od\sech^2 y_j
 \right).
\end{align*}
Since
\[
 |b_j(q(x))|\le G
\]
and
\[
 \left|
 b_{j-1}(q(x))
 -
 b_{j-1}\!\left(q^{(j\leftarrow0)}(x)\right)
 \right|
 \le 2G,
\]
while $A/\delta=4\eps$, every coordinate of $h_j$ satisfies
\begin{equation}
 |(h_j)_k|
 \le
 \frac{H\eps}{\sqrt D},
 \qquad
 H:=1024,
 \qquad
 \|h_j\|\le\sqrt2H\eps.
 \label{eq:h-bounds}
\end{equation}
Indeed, a current-block coordinate is at most
$8G\eps/\sqrt D=512\eps/\sqrt D$, while a backward-difference coordinate
is at most $16G\eps/\sqrt D=1024\eps/\sqrt D$.

Take the fixed sample space
\[
 \calZ=[0,1]^{TD}
\]
with product Lebesgue measure.  Let $r=\sigma/2$, and define
\[
 \mathcal I_1=\{1\}\times[D],
 \qquad
 \mathcal I_j=\{j-1,j\}\times[D]\quad(j\ge2),
 \qquad m_j=|\mathcal I_j|.
\]
On coordinates $k=(i,\ell)\in\mathcal I_j$, define
\begin{equation}
 \beta_k(x)=\frac{\sqrt{m_j}(h_j(x))_k}{r},
 \qquad
 S_{i\ell}(x,z)=2\ind\!\left\{
 z_{i\ell}\le\frac{1+\beta_{i\ell}(x)}2\right\}-1,
 \qquad
 R_{i\ell}(x,z)=\frac r{\sqrt{m_j}}S_{i\ell}(x,z),
 \label{eq:bounded-response}
\end{equation}
and put $R=0$ outside $\mathcal I_j$.  For $p(x)<T$, we then define the response to the gradient oracle as
\begin{equation}
 g_\theta(x,z)=B_j(x)+R(x,z),\qquad j=p(x)+1.
 \label{eq:oracle}
\end{equation}

\begin{lemma}[Oracle validity]
\label{lem:oracle}
If
\begin{equation}
 \eps\le2^{-20}\sigma,
 \label{eq:noise-regime}
\end{equation}
then \eqref{eq:oracle}, together with the exact-gradient branch $p=T$, is
jointly Borel measurable, unbiased at every $x$, and obeys
\begin{equation}
 \|g_\theta(x,z)-\nabla F_\theta(x)\|<\sigma
 \qquad\text{for every }x\in\R^{TD},\ z\in\calZ.
 \label{eq:uniform-bound}
\end{equation}
Moreover, Fresh samples $Z_t\overset{\mathrm{i.i.d.}}{\sim}\calP$ are used at successive queries, and the sample law $\calP$ remains fixed throughout the interaction.
\end{lemma}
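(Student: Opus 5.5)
The plan is to verify the three properties (measurability, unbiasedness, uniform boundedness) separately, in each case splitting into the branch $p(x)=T$ and the branch $p(x)<T$ with $j=p(x)+1$. The branch $p(x)=T$ is trivial: $g_\theta=\nabla F_\theta$ has zero error and is a deterministic continuous function of $x$. The fixed-law and fresh-sample statement is immediate from the construction, since $\calZ=[0,1]^{TD}$ with product Lebesgue measure does not depend on $x$ or on $t$. In the interaction model each $Z_t$ is simply drawn i.i.d. from this law.

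First I check that the biased signs are well defined, i.e.\ $|\beta_k(x)|\le1$. By \eqref{eq:h-bounds}, each coordinate satisfies $|(h_j)_k|\le H\eps/\sqrt D$. Also $m_j\le2D$ and $r=\sigma/2$. Hence
\[
|\beta_k|\le\frac{\sqrt{2D}\,H\eps}{\sqrt D\,\sigma/2}=\frac{2\sqrt2\,H\eps}{\sigma}=\frac{2^{11.5}\eps}{\sigma}\le2^{-8.5}<1
\]
under \eqref{eq:noise-regime}. It follows that $(1+\beta_k)/2\in[0,1]$. Then $\Prob(S_k=1)=(1+\beta_k)/2$, so $\E S_k=\beta_k$ and
\[
\E R_k=\frac{r}{\sqrt{m_j}}\,\beta_k=(h_j)_k .
\]
Since $R$ and $h_j$ both vanish outside $\mathcal I_j$, this gives $\E R=h_j$. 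Combined with \eqref{eq:B-plus-h}, we get $\E g_\theta(x,Z)=B_j+h_j=\nabla F_\theta(x)$, which is unbiasedness.

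For the uniform bound, note that $g_\theta-\nabla F_\theta=R-h_j$ for every $z$, not merely almost surely. Each $|R_k|=r/\sqrt{m_j}$ on $\mathcal I_j$, so $\|R\|=r=\sigma/2$ identically. Also $\|h_j\|\le\sqrt2H\eps\le\sqrt2\cdot2^{10}\cdot2^{-20}\sigma<\sigma/2$. The triangle inequality then gives $\|R-h_j\|<\sigma$ for every $z\in\calZ$, which is \eqref{eq:uniform-bound}; we may take $\calZ_0=\calZ$.

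Measurability is the step needing the most care, though it is routine. The plan is as follows.
\begin{enumerate}
\item The function $p(x)$ is Borel, since each set $\{|q_i(x)|>1/4\}$ is open and $q$ is continuous. Hence the regions $\{p=j-1\}$ form a finite Borel partition.
\item On each region, $B_j(x)$ and $h_j(x)$ are continuous in $x$. They are built from $\tanh$, $\sech$, $b_i$ and $b_{j-1}(q^{(j\leftarrow0)})$, all continuous, and $\theta$ is fixed.
\item The map $(x,z)\mapsto\ind\{z_{i\ell}\le(1+\beta_{i\ell}(x))/2\}$ is the indicator of the closed set $\{(x,z):z_{i\ell}-(1+\beta_{i\ell}(x))/2\le0\}$, intersected with a Borel region. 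It is therefore jointly Borel.
\end{enumerate}
Gluing finitely many jointly Borel pieces gives a jointly Borel $g_\theta$. The only genuine obstacle I anticipate is bookkeeping the constant in $|\beta_k|\le1$ and in $\|h_j\|<\sigma/2$. Both have ample slack given $c_0=2^{-20}$.
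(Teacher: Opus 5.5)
Your proposal is correct and follows the same route as the paper. You bound $|\beta_k|$ via \eqref{eq:h-bounds} so the thresholds lie in $[0,1]$, obtain $\E R=h_j$ and hence unbiasedness from \eqref{eq:B-plus-h}, use $\|R\|=r=\sigma/2$ together with the smallness of $\|h_j\|$ to get the bound for every $z$, and glue Borel pieces over the finite partition $\{p(x)=p\}$. The only differences are cosmetic: your constants are slightly sharper in places ($|\beta_k|\le 2^{-8.5}$ versus the paper's $\le 1/2$) and slightly looser in others ($\|h_j\|<\sigma/2$ versus $<\sigma/4$), and your measurability argument spells out what the paper states in one line.
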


\begin{proof}
From \eqref{eq:h-bounds} and \eqref{eq:noise-regime},
\begin{equation}
 \|h_j\|<\sigma/4.
 \label{eq:h-small}
\end{equation}
Since $m_j\le2D$, every active coordinate satisfies
\[
 |\beta_k|
 \le\frac{\sqrt{2D}}r\frac{H\eps}{\sqrt D}
 \le\frac{2\sqrt2H\eps}{\sigma}\le\frac12.
\]
Thresholding a uniform coordinate in \eqref{eq:bounded-response} gives
$\E S_k=\beta_k$, hence $\E R=h_j$.  Equations
\eqref{eq:B-plus-h} and \eqref{eq:oracle} prove unbiasedness.  Every
realization satisfies $\|R\|=r$, and therefore
\[
 \|g_\theta(x,z)-\nabla F_\theta(x)\|
 \le\|R\|+\|h_j\|<3\sigma/4.
\]
It remains to verify the uniformity, measurability, and fixed-sample-law
requirements in Definition~\ref{def:oracle}.  The bound above holds for
every $z\in\calZ$ when $p(x)<T$, while the oracle returns the exact
gradient when $p(x)=T$; hence the common probability-one set can be taken
to be all of $\calZ$.  All ingredients of $g_\theta$ are Borel, and the
sets $\{x:p(x)=p\}$ are Borel, so the finite branch definition makes
$g_\theta$ jointly Borel measurable.  Finally, fixing product Lebesgue
measure $\calP$ on $\calZ$ and drawing fresh
$Z_t\overset{\mathrm{i.i.d.}}{\sim}\calP$ at successive queries gives a
single sample law throughout the interaction, although the response
distribution may vary with the query.
\end{proof}

We now define the two response distributions used in the information
argument.  Let $p<T$, let $x$ satisfy $p(x)=p$.  For every
Borel set $\mathcal A\subseteq\R^{TD}$, define
\[
 \mathsf P_p^\theta(x,\mathcal A)
 :=\Pr_Z\bigl(g_\theta(x,Z)\in\mathcal A\bigr).
\]
Thus $\mathsf P_p^\theta(x,\cdot)$ is simply the law of the next oracle
response after the current query and the past history have been fixed.  Let
$\mathsf Q_p^{\theta_{\le p}}(x,\cdot)$ be the same distribution \emph{except that
the signs on $\mathcal I_{p+1}$ are fair, rather than having the biases
$\beta_k(x)$ in \eqref{eq:bounded-response}}.  Its base vector is still
$B_{p+1}(x)$, so after conditioning on the previously encountered sign
vectors $\theta_{\le p}$, this reference distribution is independent of
$\theta_{p+1}$.

For $|u|\le1/2$,
\[
 \frac12\bigl[(1+u)\log(1+u)+(1-u)\log(1-u)\bigr]\le u^2.
\]
Indeed, the left side and its first derivative vanish at zero, and its second
derivative is $1/(1-u^2)\le4/3$.  Independence of the sign coordinates and
\eqref{eq:h-bounds} imply
\begin{align}
\KL\!\left(
\mathsf P_p^\theta(x,\cdot)
\,\middle\|\,
\mathsf Q_p^{\theta_{\le p}}(x,\cdot)
\right)
&\le\sum_{k\in\mathcal I_j}\beta_k^2
=\frac{m_j}{r^2}\|h_j\|^2
\nonumber\\
&\le16H^2D\frac{\eps^2}{\sigma^2}
=K D\frac{\eps^2}{\sigma^2},
\qquad K:=16H^2=2^{24}.
\label{eq:one-response-kl}
\end{align}
The left side is a conditional KL divergence: in an adaptive interaction,
the past fixes $x$ and the old sign vectors before this one-step comparison
is made.

\begin{remark}
This is where the bounded-noise construction departs from the earlier
bounded-variance lower bounds.  In prior constructions, a rare reveal with probability $\rho$ must
scale a mean vector $h$ to $h/\rho$ when it occurs, which violates a uniform
bound when $\rho$ is small.  Here the mean $h_j$ is spread across many signs and the oracle error remains uniformly bounded almost surely. Hardness comes from the small KL
divergence above rather than from a rare large response.  
\end{remark}

The distributions
$\mathsf P_p^\theta$ and $\mathsf Q_p^{\theta_{\le p}}$ have so far been
defined only at points whose value of \eqref{eq:last-crossed} is $p$.
The next section extends the same formulas to every query point so that they
can be used in a fixed-length comparison.

\section{Adaptive information accumulation and the query lower bound}
\label{sec:information}

The purpose of this section is to turn the one-response KL bound
\eqref{eq:one-response-kl} into a lower bound on the number of oracle
responses required to cross one link of the chain.  The actual oracle is
not changed in this argument.  We introduce a reference response
distribution only for comparison: for a fixed coordinate \(i\), the
reference responses are constructed so that they do not depend on
\(\Theta_i\).  If an algorithm could cross coordinate \(i\) after only a
few relevant responses, it would therefore distinguish the actual and
reference response distributions with substantial probability.

The comparison is quantified by the conditional KL chain rule.  Consider
two adaptive experiments with joint laws \(\mathbf P\) and \(\mathbf Q\)
that use the same algorithm and differ only in some of their conditional
response distributions.  For this paragraph, let
\(\mathcal F_{s-1}\) denote all variables fixed before the \(s\)-th response
\(Y_s\) is sampled, including the sign vector under consideration, the
initial variables on which we have conditioned, and the preceding
responses.  The chain rule gives
\[
  \KL(\mathbf P\|\mathbf Q)
  =
  \mathbb E_{\mathbf P}
  \left[
    \sum_s
    \KL\!\left(
      \mathbf P(Y_s\in\cdot\mid\mathcal F_{s-1})
      \,\middle\|\,
      \mathbf Q(Y_s\in\cdot\mid\mathcal F_{s-1})
    \right)
  \right].
\]
The queries make no separate contribution to this sum because, conditional
on \(\mathcal F_{s-1}\), the next query is determined by the algorithm.
In particular, each history sampled under the outer
\(\mathbf P\)-expectation determines a possibly different query point
\(x_s\).

This observation explains why an extension of the response distributions
is necessary.  Section~\ref{sec:oracle} defined
\(\mathsf P_p^\theta(x,\cdot)\) and
\(\mathsf Q_p^{\theta_{\le p}}(x,\cdot)\) only when
\(p_\theta(x)=p\).  In the comparison above, however, the integer \(p\)
used to select a response distribution and the adaptively generated point
\(x_s\) need not satisfy \(p_\theta(x_s)=p\).  To control every conditional
KL term, we therefore extend both response distributions to every pair
\((p,x)\), with \(0\le p<T\), and prove the uniform bound
\[
  \KL\!\left(
    \mathsf P_p^\theta(x,\cdot)
    \,\middle\|\,
    \mathsf Q_p^{\theta_{\le p}}(x,\cdot)
  \right)
  \le KD\frac{\eps^2}{\sigma^2}.
\]
The extended response agrees with the actual oracle whenever
\(p_\theta(x)=p\).  Thus it can be used at arbitrary query points in the
KL calculation without altering the actual oracle interaction on the
events where the two are later compared.

The remainder of the section instantiates this argument.  We begin by constructing an extension of the response
distributions, and establish their uniform KL bound. Next, we define an interaction process whose crossed coordinate can increase by at most one and prove that it agrees with the actual interaction except on an event of
small probability. For each fixed \(i\), we then compare two
fixed-length continuations: their conditional response distributions
differ at no more than \(n\) positions, so the chain rule bounds their
total KL divergence by
\[
  nKD\frac{\eps^2}{\sigma^2}.
\]
Under the reference continuation, the queries are independent of
\(\Theta_i\), and the correlation bound therefore makes early crossing
unlikely.  Finally, we sum the required response counts over
\(i=1,\ldots,T\) to obtain the query lower bound.
\\

Define
\begin{equation}
 n_0=\left\lfloor\frac{\sigma^2}{2^{14}K\eps^2}\right\rfloor,
 \qquad
 D=\left\lceil\max\left\{2^{15},
 256\log\bigl(16(N+1)T\bigr)\right\}\right\rceil.
 \label{eq:n0D}
\end{equation}
Condition \eqref{eq:accuracy-regime} and the definition
\eqref{eq:ATdelta} imply $T\ge1$ and
$\sigma^2/\eps^2\ge2^{40}$, so that $n_0 \geq 4$.

Let $\Theta_1,\ldots,\Theta_T$ be drawn independently and uniformly from
$\{-1,1\}^D$, independently of the algorithm's private seed $U$ and the
oracle samples $Z_1,\ldots,Z_N$.  For a deterministic sign sequence
$\theta=(\theta_1,\ldots,\theta_T)$, define
\begin{equation}
 a_i(x):=[\tanh(\sqrt D\,x_{i\ell}/\delta)]_{\ell=1}^D,
 \qquad
 q_i^\theta(x):=\frac2D\langle\theta_i,a_i(x)\rangle,
 \label{eq:a-code}
\end{equation}
Write
\[
 q^\theta(x):=(q_1^\theta(x),\ldots,q_T^\theta(x)),
\]
and define
\begin{equation}
 p_\theta(x):=
 \max\bigl(\{i\in[T]:|q_i^\theta(x)|>1/4\}\cup\{0\}\bigr).
 \label{eq:random-last-crossed}
\end{equation}
Thus $q_i^\theta$ and $p_\theta$ are exactly the functions $q_i$ and $p$
from Sections~\ref{sec:embedding} and~\ref{sec:oracle}, with the dependence
on the sign sequence explicitly displayed.  We use $q_i^\Theta$ and $p_\Theta$ for
their random-sign versions.  We also write
$\theta_{<i}=(\theta_1,\ldots,\theta_{i-1})$ and
$\theta_{\le i}=(\theta_1,\ldots,\theta_i)$; the corresponding notation is
used for $\Theta$, and $\theta_{\le0}$ denotes the empty tuple.

\subsection{A correlation bound for an independent sign vector}

The first lemma is the only concentration estimate needed below.  Its
random-vector formulation is useful because an adaptive query may depend on
everything observed so far, provided that this information does not depend
on the particular sign vector appearing in the lemma.

\begin{lemma}[Independent-sign correlation]
\label{lem:sign-correlation}
Let $\Xi$ be uniform on $\{-1,1\}^D$, and let $V$ be an
$[-1,1]^D$-valued random vector independent of $\Xi$.  Then
\begin{equation}
 \Prob\!\left(
 \left|\frac2D\langle\Xi,V\rangle\right|>\frac14
 \right)
 \le 2e^{-D/128}.
 \label{eq:sign-tail}
\end{equation}
\end{lemma}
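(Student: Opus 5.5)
We condition on $V$ and apply Hoeffding's inequality to the Rademacher sum, then integrate out $V$. Since $\Xi$ is independent of $V$, the conditional law of $\Xi$ given $V=v$ is still uniform on $\{-1,1\}^D$ for almost every $v$. It therefore suffices to prove the bound for a fixed deterministic vector $v\in[-1,1]^D$ and then take expectation over $V$.

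For fixed $v$, the event in \eqref{eq:sign-tail} is
\[
 \left|\sum_{\ell=1}^D\Xi_\ell v_\ell\right|>\frac D8 .
\]
The summands $\Xi_\ell v_\ell$ are independent, have mean zero, and take values in $[-|v_\ell|,|v_\ell|]\subseteq[-1,1]$. Hoeffding's inequality (equivalently, the sub-Gaussian moment bound $\E e^{\lambda\Xi_\ell v_\ell}=\cosh(\lambda v_\ell)\le e^{\lambda^2v_\ell^2/2}$ followed by Chernoff's bound) gives
\[
 \Prob\left(\left|\sum_\ell\Xi_\ell v_\ell\right|>s\right)
 \le2\exp\!\left(-\frac{s^2}{2\sum_\ell v_\ell^2}\right)
 \le2\exp\!\left(-\frac{s^2}{2D}\right).
\]
With $s=D/8$ this equals $2\exp(-D/128)$, exactly the constant in \eqref{eq:sign-tail}.

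To pass to random $V$, we use Fubini on the product law of $(\Xi,V)$. Independence makes the joint law a product measure, and the indicator of the event is jointly Borel. Hence
\[
\Prob(\cdot)=\E_V\bigl[\phi(V)\bigr]\le2e^{-D/128},
\]
where $\phi(v)$ denotes the probability for fixed $v$ and is bounded as above.

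There is no real obstacle. The only points to watch are that the bound uses $\sum_\ell v_\ell^2\le D$, which holds since $|v_\ell|\le1$, and that the constants give exactly $128$. The lemma is stated for random $V$ because adaptive queries will later be plugged in through $V=a_i(x_t)$, which lies in $[-1,1]^D$ since each coordinate is a $\tanh$ value.
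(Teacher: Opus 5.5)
Your proof is correct and follows the paper's argument: condition on $V=v$, bound the moment generating function by $\cosh u\le e^{u^2/2}$ and $\sum_\ell v_\ell^2\le D$, apply Chernoff at threshold $D/8$ (optimized at $\lambda=1/8$) to each tail, and average over $V$. Your explicit Fubini justification for the final averaging step simply spells out what the paper compresses into ``average over $V$.''
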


\begin{proof}
Condition on $V=v$.  For every $\lambda\in\R$, independence of the
coordinates of $\Xi$ and the inequality $\cosh u\le e^{u^2/2}$ give
\[
 \E\exp\!\left(\lambda\langle\Xi,v\rangle\right)
 =\prod_{\ell=1}^D\cosh(\lambda v_\ell)
 \le\exp\!\left(\frac{\lambda^2}{2}
                 \sum_{\ell=1}^Dv_\ell^2\right)
 \le e^{\lambda^2D/2}.
\]
Chernoff's inequality at threshold $D/8$, optimized at
$\lambda=1/8$, yields
\[
 \Prob\bigl(\langle\Xi,v\rangle>D/8\mid V=v\bigr)
 \le e^{-D/128}.
\]
Apply the same bound to $-\Xi$ and then average over $V$ yields the desired inequality.
\end{proof}

\subsection{Response distributions at arbitrary queries}

In Section~\ref{sec:oracle}, we defined
$\mathsf P_p^\theta(x,\cdot)$ and
$\mathsf Q_p^{\theta_{\le p}}(x,\cdot)$ for
$p_\theta(x)=p$.  The next definition extends these distributions to every
pair $(p,x)$: it specifies the response that would be generated at $x$ if
$p+1$ were treated as the next coordinate whose sign vector may affect the
response, even when the actual value of $p_\theta(x)$ is different from
$p$.  This extension allows the comparison interaction in the next
subsection to choose its response index without first conditioning on
$p_\theta(x)=p$, while it is also guaranteed that the extended
response is identical to the actual oracle response whenever
$p_\theta(x)=p$.  The following lemma proves this agreement together with
the uniform KL bound.

\begin{definition}[Extended response maps]
\label{def:extended-response}
Fix a deterministic $\theta$.  We use the explicit notation
\[
 \calG(x):=(G_1(x),\ldots,G_T(x)).
\]
For $0\le p<T$, set
\[
 q_\theta^{[p]}(x)
 :=(q_1^\theta(x),\ldots,q_{p+1}^\theta(x),0,\ldots,0).
\]
For $1\le i\le p+1$, define the full-dimensional vector
\begin{equation}
 \mathbf C_i^{[p],\theta}(x)
 :=\iota_i\!\left(
 \frac{2A}{\delta\sqrt D}\,
 b_i(q_\theta^{[p]}(x))
 [\theta_{i\ell}\sech^2y_{i\ell}]_{\ell=1}^D
 \right)\in\R^{TD}.
 \label{eq:extended-C}
\end{equation}
For $1\le p<T$, also define
\begin{equation}
 \widetilde{\mathbf C}_p^{[p],\theta}(x)
 :=\iota_p\!\left(
 \frac{2A}{\delta\sqrt D}\,
 b_p(q_\theta^{[p-1]}(x))
 [\theta_{p\ell}\sech^2y_{p\ell}]_{\ell=1}^D
 \right)\in\R^{TD}.
 \label{eq:extended-C-tilde}
\end{equation}
Now let
\begin{align}
 \Gamma_p^\theta(x)
 &:=\calG(x)+\sum_{i=1}^{p+1}\mathbf C_i^{[p],\theta}(x),
 \nonumber\\
 \bar B_0^\theta(x)&:=\calG(x),
 \nonumber\\
 \bar B_p^\theta(x)
 &:=\calG(x)+\sum_{i=1}^{p-1}\mathbf C_i^{[p],\theta}(x)
       +\widetilde{\mathbf C}_p^{[p],\theta}(x),
       \qquad 1\le p<T,
 \nonumber\\
 H_p^\theta(x)&:=\Gamma_p^\theta(x)-\bar B_p^\theta(x).
 \label{eq:extended-decomposition}
\end{align}
Thus $\Gamma_p^\theta(x),\bar B_p^\theta(x),H_p^\theta(x) \in\R^{TD}$.
\\

For $k\in\mathcal I_{p+1}$, define
\begin{align}
 \beta_{p,k}^\theta(x)
 &:=\frac{\sqrt{m_{p+1}}\,(H_p^\theta(x))_k}{r},
 \nonumber\\
 S_{p,k}^\theta(x,z)
 &:=2\ind\!\left\{z_k\le
       \frac{1+\beta_{p,k}^\theta(x)}2\right\}-1,
 \nonumber\\
 S_{p,k}^0(z)&:=2\ind\{z_k\le1/2\}-1,
 \label{eq:extended-signs}
\end{align}
where we recall from Section~\ref{sec:oracle} that
$m_{p+1}=|\mathcal I_{p+1}|$, so $m_1=D$ and
$m_{p+1}=2D$ for $p\ge1$.
Define vectors $R_p^\theta(x,z),R_p^0(z)\in\R^{TD}$ coordinatewise by
\begin{equation}
 (R_p^\theta(x,z))_k
 :=\frac r{\sqrt{m_{p+1}}}S_{p,k}^\theta(x,z),
 \qquad
 (R_p^0(z))_k
 :=\frac r{\sqrt{m_{p+1}}}S_{p,k}^0(z)
 \quad(k\in\mathcal I_{p+1}),
 \label{eq:extended-random-vectors}
\end{equation}
and set both vectors equal to zero outside $\mathcal I_{p+1}$.  Finally,
\begin{align}
 \bar g_p^\theta(x,z)&:=\bar B_p^\theta(x)+R_p^\theta(x,z),
 &
 \bar g_{p,0}^{\theta_{\le p}}(x,z)
 &:=\bar B_p^\theta(x)+R_p^0(z),
 &&0\le p<T,
 \label{eq:extended-response-maps}\\
 \bar g_T^\theta(x,z)&:=\nabla F_\theta(x).
 \nonumber
\end{align}
For every Borel set $\mathcal A\subseteq\R^{TD}$, extend the notation from
Section~\ref{sec:oracle} by
\begin{align}
 \mathsf P_p^\theta(x,\mathcal A)
 &:=\Prob_Z\bigl(\bar g_p^\theta(x,Z)\in\mathcal A\bigr),
 &&0\le p\le T,
 \nonumber\\
 \mathsf Q_p^{\theta_{\le p}}(x,\mathcal A)
 &:=\Prob_Z\bigl(
     \bar g_{p,0}^{\theta_{\le p}}(x,Z)\in\mathcal A\bigr),
 &&0\le p<T.
 \label{eq:extended-laws}
\end{align}
\end{definition}

The definitions above separate the part of a response that is already
determined by $\theta_{\le p}$ from the part that may depend on the next
sign vector $\theta_{p+1}$.  The vector $q_\theta^{[p]}(x)$ sets all
encoded coordinates after $p+1$ to zero, and $\Gamma_p^\theta(x)$ is the
resulting full-dimensional vector obtained from the corresponding chain
derivatives.  The vector $\bar B_p^\theta(x)$ retains only the terms
determined by $\theta_{\le p}$, while
\[
  H_p^\theta(x)
  =\Gamma_p^\theta(x)-\bar B_p^\theta(x)
\]
collects the remaining terms, which are supported on
$\mathcal I_{p+1}$ and may depend on $\theta_{p+1}$.  Recall that
$m_{p+1}=|\mathcal I_{p+1}|$; the normalization by
$\sqrt{m_{p+1}}$ keeps the norm of every random response fixed.
The distribution $\mathsf P_p^\theta$ uses slightly biased signs whose
mean contribution is $H_p^\theta(x)$, whereas
$\mathsf Q_p^{\theta_{\le p}}$ replaces them by fair signs and is therefore
the same for all choices of $\theta_{p+1}$.  These distributions are
defined for every $x$. In Lemma~\ref{lem:extended-response}, we show that whenever $p_\theta(x)=p$,
$\mathsf P_p^\theta(x,\cdot)$ is exactly the actual oracle-response distribution, and that similar KL bounds also hold for the extension. 

\begin{lemma}[Properties of the extended response maps]
\label{lem:extended-response}
For every $0\le p<T$, the maps in
Definition~\ref{def:extended-response} have the following properties.
\begin{enumerate}[label=\textup{(\alph*)}]
\item $\bar B_p^\theta(x)$ depends on $\theta$ only through
      $\theta_{\le p}$, while $H_p^\theta(x)$ depends on $\theta$ only
      through $\theta_{\le p+1}$.  Moreover, $H_0^\theta$ is supported on
      block $1$, $H_p^\theta$ is supported on blocks $p,p+1$ when $p\ge1$,
      and
      \begin{equation}
       |(H_p^\theta(x))_k|\le\frac{H\eps}{\sqrt D},
       \qquad
       \|H_p^\theta(x)\|\le\sqrt2H\eps.
       \label{eq:extended-H-bounds}
      \end{equation}
\item The response maps are jointly Borel measurable.  Their expectations
      satisfy
      $\E_Z\bar g_p^\theta(x,Z)=\Gamma_p^\theta(x)$.  Moreover,
      $\mathsf Q_p^{\theta_{\le p}}(x,\cdot)$ is the same for any two
      sign sequences having the same prefix $\theta_{\le p}$.
\item For every $x\in\R^{TD}$,
      \begin{equation}
       \KL\!\left(
       \mathsf P_p^\theta(x,\cdot)
       \,\middle\|\,
       \mathsf Q_p^{\theta_{\le p}}(x,\cdot)
       \right)
       \le KD\frac{\eps^2}{\sigma^2}.
       \label{eq:extended-kl}
      \end{equation}
\item If $p_\theta(x)=p$, then
      \begin{equation}
       \Gamma_p^\theta(x)=\nabla F_\theta(x),
       \qquad
       \bar B_p^\theta(x)=B_{p+1}(x),
       \qquad
       H_p^\theta(x)=h_{p+1}(x),
       \label{eq:response-agreement}
      \end{equation}
      and $\bar g_p^\theta(x,z)=g_\theta(x,z)$ for every $z\in\calZ$.
\end{enumerate}
\end{lemma}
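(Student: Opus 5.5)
The plan is to verify the four items directly from Definition~\ref{def:extended-response}, reusing the calculations already done for $h_j$ in Section~\ref{sec:oracle} and the KL estimate \eqref{eq:one-response-kl}.

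\textbf{Part (a).} First compute $H_p^\theta$ explicitly. Since $\Gamma_p^\theta$ and $\bar B_p^\theta$ share $\calG$ and the terms $\mathbf C_i^{[p],\theta}$ for $i\le p-1$, we get
\[
H_p^\theta=\mathbf C_{p+1}^{[p],\theta}+\ind_{\{p\ge1\}}\bigl(\mathbf C_p^{[p],\theta}-\widetilde{\mathbf C}_p^{[p],\theta}\bigr).
\]
This displays the support on blocks $p,p+1$, or on block $1$ when $p=0$.

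For the dependence claims, use the nearest-neighbor structure \eqref{eq:first-derivative-chain}: $b_i(s)$ depends only on $s_{i-1},s_i,s_{i+1}$. In $\bar B_p^\theta$, the terms $b_i(q_\theta^{[p]})$ with $i\le p-1$ involve only $q_{\le p}^\theta$, hence only $\theta_{\le p}$. The term $b_p(q_\theta^{[p-1]})$ sees $q_{p-1},q_p$ and a zero in slot $p+1$, so it also depends only on $\theta_{\le p}$. The prefactors involve $\theta_i$ with $i\le p$, and $\calG$ is $\theta$-free. In $H_p^\theta$, the coordinate $q_{p+1}^\theta$ enters, so the dependence is on $\theta_{\le p+1}$; slot $p+2$ is zeroed.

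The coordinate bounds then follow exactly as in \eqref{eq:h-bounds}. We have $|b_{p+1}|\le G$ and $|b_p(\cdot)-b_p(\cdot)|\le 2G$ by Lemma~\ref{lem:chain}, together with $\sech^2\le1$ and $A/\delta=4\eps$. This gives $|(H_p^\theta)_k|\le 16G\eps/\sqrt D=H\eps/\sqrt D$, and summing over at most $2D$ coordinates gives $\|H_p^\theta\|\le\sqrt2H\eps$.

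\textbf{Part (b).} Measurability is immediate: all ingredients are continuous in $x$, and the indicators are Borel in $(x,z)$. For the expectation, use (a) together with $\eps\le2^{-20}\sigma$ (which holds in the regime \eqref{eq:accuracy-regime}). This gives $|\beta_{p,k}^\theta|\le 2\sqrt2H\eps/\sigma\le1/2$, so the thresholds lie in $[1/4,3/4]$. Hence $\E S_{p,k}^\theta=\beta_{p,k}^\theta$ and $\E R_p^\theta=H_p^\theta$, which yields $\E\bar g_p^\theta=\bar B_p^\theta+H_p^\theta=\Gamma_p^\theta$. The law $\mathsf Q_p$ is the pushforward of fair signs under a translation by $\bar B_p^\theta(x)$. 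By (a), this translation depends only on $\theta_{\le p}$, so $\mathsf Q_p$ does too.

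\textbf{Part (c).} Both laws are images of the sign vector $(S_k)_{k\in\mathcal I_{p+1}}$ under the same injective map $s\mapsto \bar B_p^\theta(x)+\frac{r}{\sqrt{m_{p+1}}}s$. By the data-processing inequality (here an equality), the KL divergence equals the KL between the product Rademacher laws with means $\beta_k$ and mean $0$. That is
\[
\sum_k\tfrac12\bigl[(1+\beta_k)\log(1+\beta_k)+(1-\beta_k)\log(1-\beta_k)\bigr]\le\sum_k\beta_k^2,
\]
using the scalar inequality proved before \eqref{eq:one-response-kl}, valid since $|\beta_k|\le1/2$. Then
\[
\sum_k\beta_k^2=\frac{m_{p+1}}{r^2}\|H_p^\theta\|^2\le \frac{2D\cdot4}{\sigma^2}\,2H^2\eps^2=16H^2D\,\frac{\eps^2}{\sigma^2}.
\]

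\textbf{Part (d).} This is the step needing the most care. Suppose $p_\theta(x)=p$. Then $|q_i|\le1/4$ for all $i\ge p+1$, so $\Psi^{(k)}(\pm2q_i)=0$ for those $i$. I would show $b_i(q)=b_i(q_\theta^{[p]})$ for every $i$, and that both vanish for $i\ge p+2$. The only difference between $q$ and $q_\theta^{[p]}$ is in slots $\ge p+2$. Those slots enter $b_i$ either through gates $\Psi(\pm2q_{i-1})$ with $i-1\ge p+2$, which vanish in both vectors, or through factors $\Phi(\pm2q_{i+1})$ multiplied by $\Psi'(\pm2q_i)$ with $i\ge p+1$, which also vanish. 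Hence $\Gamma_p^\theta=\nabla F_\theta$. Comparing with \eqref{eq:gradient-decomposition} gives $\mathbf C_i^{[p],\theta}=\iota_i(C_i)$. Likewise $b_p(q_\theta^{[p-1]})=b_p(q^{(p+1\leftarrow0)})$, because these two vectors differ only in slots $\ge p+2$, which $b_p$ does not see. This gives $\widetilde{\mathbf C}_p^{[p],\theta}=\iota_p(\widetilde C_p^{(p+1)})$, and therefore $\bar B_p^\theta=B_{p+1}$ and $H_p^\theta=h_{p+1}$. Finally, $\beta_{p,k}^\theta=\beta_k$ and $m_{p+1}=m_j$, so the signs coincide pointwise and $\bar g_p^\theta(x,z)=g_\theta(x,z)$ for every $z$ by \eqref{eq:oracle}.
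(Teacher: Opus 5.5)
Your proposal is correct and follows essentially the same route as the paper's proof. Both arguments proceed as follows: the explicit formula $H_p^\theta=\mathbf C_{p+1}^{[p],\theta}+\mathbf C_p^{[p],\theta}-\widetilde{\mathbf C}_p^{[p],\theta}$ together with nearest-neighbor dependence, then the coordinate bounds inherited from the $h_j$ estimate, the threshold and expectation computation, the product-sign KL bound via the scalar inequality, and finally the agreement check using the vanishing of $\Psi,\Psi'$ when $|q_i|\le 1/4$. Your explicit verification that $b_p(q_\theta^{[p-1]})=b_p(q^{(p+1\leftarrow0)})$ fills in a step the paper leaves to ``comparing'' the definitions.
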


\begin{proof}
The local dependence in Lemma~\ref{lem:chain} shows that
$b_i(q_\theta^{[p]})$ depends only on the entries with indices
$j\in[T]$ satisfying $|j-i|\le1$.  Consequently every term in
$\bar B_p^\theta$ uses only $\theta_{\le p}$; in particular,
$b_p(q_\theta^{[p-1]})$ contains no $q_{p+1}^\theta$.  Directly from
\eqref{eq:extended-decomposition}, we have
\[
 H_0^\theta=\mathbf C_1^{[0],\theta},
 \qquad
 H_p^\theta=
 \mathbf C_p^{[p],\theta}
 -\widetilde{\mathbf C}_p^{[p],\theta}
 +\mathbf C_{p+1}^{[p],\theta}
 \quad(p\ge1).
\]
This proves the dependence and support assertions.  Since $|b_i|\le G$,
$|b_p(q_\theta^{[p]})-b_p(q_\theta^{[p-1]})|\le2G$, and
$A/\delta=4\eps$, the coordinate estimates used in
\eqref{eq:h-bounds} apply without change and give
\eqref{eq:extended-H-bounds}.

The accuracy assumption \eqref{eq:noise-regime}, together with
$m_{p+1}\le2D$ and $r=\sigma/2$, implies
\[
 |\beta_{p,k}^\theta(x)|
 \le\frac{\sqrt{2D}}r\frac{H\eps}{\sqrt D}
 =\frac{2\sqrt2H\eps}{\sigma}\le\frac12.
\]
Thus all thresholds in \eqref{eq:extended-signs} lie in $[0,1]$.  The
same calculation as in Section~\ref{sec:oracle} gives
$\E_ZR_p^\theta(x,Z)=H_p^\theta(x)$ and hence
$\E_Z\bar g_p^\theta(x,Z)=\Gamma_p^\theta(x)$.  Equations
\eqref{eq:extended-C}--\eqref{eq:extended-response-maps} are finite
compositions and finite case distinctions of Borel functions, so the maps
are jointly Borel measurable.  The formula for $\bar B_p^\theta$ and the
fair signs in $R_p^0$ also show directly that the reference distribution is
the same for any two sign sequences having the same prefix
$\theta_{\le p}$.

The biased and fair responses have the same finite support.  Independence
of their active signs and the scalar inequality used in
\eqref{eq:one-response-kl} give
\begin{align*}
 \KL\!\left(
 \mathsf P_p^\theta(x,\cdot)
 \,\middle\|\,
 \mathsf Q_p^{\theta_{\le p}}(x,\cdot)
 \right)
 &\le\sum_{k\in\mathcal I_{p+1}}
        (\beta_{p,k}^\theta(x))^2\\
 &=\frac{m_{p+1}}{r^2}\|H_p^\theta(x)\|^2
 \le16H^2D\frac{\eps^2}{\sigma^2}
 =KD\frac{\eps^2}{\sigma^2},
\end{align*}
which proves \eqref{eq:extended-kl}.

It remains to verify agreement with the actual oracle.  Suppose
$p_\theta(x)=p$.  Then $|q_i^\theta(x)|\le1/4$ for every $i>p$, and
$q_\theta^{[p]}(x)$ agrees with the original encoded vector through
coordinate $p+1$.  Nearest-neighbor dependence gives
$b_i(q_\theta^{[p]}(x))=b_i(q^\theta(x))$ for $i\le p$.  For $i=p+1$,
the only possible term involving $q_{p+2}^\theta$ is multiplied by
$\Psi'(\pm2q_{p+1}^\theta)$, which vanishes because
$|q_{p+1}^\theta|\le1/4$.  Hence the equality also holds for $i=p+1$.
Lemma~\ref{lem:chain} gives $C_i=0$ for $i\ge p+2$.  Comparing
\eqref{eq:extended-decomposition} with \eqref{eq:Bj}--\eqref{eq:hj}
now proves \eqref{eq:response-agreement}.  The biased signs and their use of
the sample coordinates $z_k$ are then identical in
\eqref{eq:extended-response-maps} and \eqref{eq:bounded-response}, proving
$\bar g_p^\theta(x,z)=g_\theta(x,z)$ for every $z$.
\end{proof}

\begin{remark}
The KL bound above is the point where the proof differs from a
rare-reveal-style construction under the bounded variance assumption.  Here, every response remains
uniformly bounded, and the difficulty is to control the sum of many small
conditional KL divergences rather than the probability of one unboundedly
large reveal.
\end{remark}

\subsection{A comparison interaction}

We next define a sequence that uses the same algorithm maps $X_t$, private
seed $U$, sign vectors $\Theta$, and samples $Z_t$ as the actual oracle
interaction.  Its additional nondecreasing integer $\nu_t$ restricts the
next index $\rho_t$ to be at most $\nu_{t-1}+1$.

\begin{definition}[Comparison interaction]
\label{def:comparison-interaction}
Set $\nu_0=0$.  For $1\le t\le N$, define recursively
\begin{align}
 x_t^\circ
 &:=X_t(U,Y_1^\circ,\ldots,Y_{t-1}^\circ),
 \nonumber\\
 \rho_t
 &:=\max\!\left(
 \left\{i\in[T]:i\le\nu_{t-1}+1,
       |q_i^\Theta(x_t^\circ)|>\frac14\right\}\cup\{0\}
 \right),
 \nonumber\\
 Y_t^\circ
 &:=\bar g_{\rho_t}^\Theta(x_t^\circ,Z_t),
 \qquad
 \nu_t:=\max\{\nu_{t-1},\rho_t\}.
 \label{eq:comparison-recursion}
\end{align}
After the $N$th response, define
\begin{align}
 x_{N+1}^\circ
 &:=X_{N+1}(U,Y_1^\circ,\ldots,Y_N^\circ),
 \nonumber\\
 \rho_{N+1}
 &:=\max\!\left(
 \left\{i\in[T]:i\le\nu_N+1,
       |q_i^\Theta(x_{N+1}^\circ)|>\frac14\right\}\cup\{0\}
 \right),
 \qquad
 \nu_{N+1}:=\max\{\nu_N,\rho_{N+1}\}.
 \label{eq:comparison-output}
\end{align}
No response is generated at $x_{N+1}^\circ$.  Finally, define the event
\begin{equation}
 \mathcal J:=
 \left\{\exists\,s\in[N+1],\ \exists\,i\in[T]:
 i>\nu_{s-1}+1\ 
 \text{ and }\ |q_i^\Theta(x_s^\circ)|>\frac14\right\}.
 \label{eq:skipped-index-event}
\end{equation}
\end{definition}

In other words, for $t\le N$,
$x_t^\circ$ is the algorithm's $t$-th query, $\rho_t$ is the index of the
response distribution used there, and $Y_t^\circ$ is the response.  The
value $\rho_{N+1}$ is used only to update $\nu_{N+1}$ at the final output;
no response is generated there.  The next lemma explains why this
comparison is useful.

\begin{lemma}[Comparison with the actual oracle interaction]
\label{lem:comparison-interaction}
For the recursion in Definition~\ref{def:comparison-interaction}:
\begin{enumerate}[label=\textup{(\alph*)}]
\item for $s\in[N+1]$ and $k\in\{0,\ldots,T\}$, let
      \begin{equation}
       \mathcal H_{s,k}:=\sigma\!\left(
       U,Z_1,\ldots,Z_{s-1},
       \Theta_1,\ldots,\Theta_{\min\{k+1,T\}}
       \right),
       \label{eq:comparison-sigma-field}
      \end{equation}
      with empty lists omitted.  Then $\{\nu_{s-1}=k\}$ and
      $x_s^\circ\ind\{\nu_{s-1}=k\}$ are
      $\mathcal H_{s,k}$-measurable.  Every $\Theta_i$ with $i>k+1$ is
      independent of $\mathcal H_{s,k}$ and is uniform on
      $\{-1,1\}^D$;
\item
      \begin{equation}
       \Prob(\mathcal J)\le\frac1{128};
       \label{eq:jump-bound}
      \end{equation}
\item if the actual interaction from Section~\ref{sec:model} is run with
      objective $F_\Theta$, oracle $g_\Theta$, and the same
      $(U,Z_1,\ldots,Z_N)$, then on $\mathcal J^c$,
      \begin{equation}
       x_t=x_t^\circ,\quad Y_t=Y_t^\circ\quad(1\le t\le N),
       \qquad \widehat x_N=x_{N+1}^\circ.
       \label{eq:actual-comparison-agreement}
      \end{equation}
      Moreover, on $\mathcal J^c$,
      \begin{equation}
       p_\Theta(\widehat x_N)=T
       \quad\Longrightarrow\quad \nu_{N+1}=T.
       \label{eq:output-progress-implication}
      \end{equation}
\end{enumerate}
\end{lemma}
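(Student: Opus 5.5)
We prove the three parts in order; (a) carries the structure, (b) is a union bound built on (a) and Lemma~\ref{lem:sign-correlation}, and (c) is an induction on $t$.

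\textbf{Part (a).} We induct on $s$, proving simultaneously that $\{\nu_{s-1}=k\}$ and $x_s^\circ\ind\{\nu_{s-1}=k\}$ are $\mathcal H_{s,k}$-measurable. For $s=1$, $\nu_0=0$ and $x_1^\circ=X_1(U)$, so the claim is immediate.

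For the inductive step, fix $k$. The event $\{\nu_s=k\}$ is the disjoint union over $k'\le k$ of the events $\{\nu_{s-1}=k'\}\cap\{\max\{k',\rho_s\}=k\}$. On $\{\nu_{s-1}=k'\}$ the following hold.
\begin{itemize}
\item $x_s^\circ$ is $\mathcal H_{s,k'}$-measurable, by the inductive hypothesis.
\item $\rho_s$ only inspects $q_i^\Theta$ for $i\le k'+1$, so it depends on $\Theta_{\le k'+1}$ and $x_s^\circ$.
\item $Y_s^\circ=\bar g_{\rho_s}^\Theta(x_s^\circ,Z_s)$ is determined by the response-map signs $\Theta_{\le \rho_s}\subseteq\Theta_{\le k'+1}$. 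Here we use Lemma~\ref{lem:extended-response}(a): $\bar g^\Theta_\rho$ depends only on $\Theta_{\le\rho+1}$. The caveat is that when $\rho_s=k'+1$, $Y_s^\circ$ can involve $\Theta_{k'+2}$; but then $\nu_s=k'+1$, so this is covered by $\mathcal H_{s+1,\nu_s}$.
\end{itemize}
So on $\{\nu_s=k\}$, every response $Y_1^\circ,\dots,Y_s^\circ$ is measurable with respect to $\sigma(U,Z_{\le s},\Theta_{\le k+1})=\mathcal H_{s+1,k}$. This uses that $\nu$ is nondecreasing, so every earlier $\rho_t+1\le\nu_t+1\le k+1$, together with $\mathcal H_{t,k'}\subseteq\mathcal H_{s+1,k}$ for $t\le s+1$ and $k'\le k$. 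Hence $x_{s+1}^\circ\ind\{\nu_s=k\}$ is $\mathcal H_{s+1,k}$-measurable.

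Formally, we write each quantity as a finite sum over $k'$ of indicator-weighted measurable functions. The independence claim follows because the $\Theta_i$ are independent of $(U,Z)$ and of one another.

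\textbf{Part (b).} We have $\mathcal J\subseteq\bigcup_{s\le N+1}\bigcup_{k}\bigcup_{i>k+1}\{\nu_{s-1}=k,\ |q_i^\Theta(x_s^\circ)|>1/4\}$. For fixed $s,k,i$ with $i>k+1$, set $V=a_i(x_s^\circ)\ind\{\nu_{s-1}=k\}$. By (a), $V$ is $\mathcal H_{s,k}$-measurable and $[-1,1]^D$-valued, and $\Theta_i$ is independent of it and uniform. On $\{\nu_{s-1}=k\}$ we have $q_i^\Theta(x_s^\circ)=\frac2D\langle\Theta_i,V\rangle$, so Lemma~\ref{lem:sign-correlation} bounds each term by $2e^{-D/128}$.

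The union over $k$ can be avoided: the events $\{\nu_{s-1}=k\}$ are disjoint, so for each fixed $(s,i)$ we sum $\Prob(\nu_{s-1}=k,\ \text{crossing})$ over $k<i-1$. Bounding this sum directly by $2e^{-D/128}$ requires a combined $V$. Instead we take $V=a_i(x_s^\circ)\ind\{\nu_{s-1}<i-1\}$. Since $\{\nu_{s-1}<i-1\}\in\mathcal H_{s,i-2}$ and, on that event, $x_s^\circ$ is $\mathcal H_{s,i-2}$-measurable, this $V$ is independent of $\Theta_i$. The total bound is therefore $(N+1)T\cdot2e^{-D/128}$. By the choice $D\ge256\log(16(N+1)T)$ in \eqref{eq:n0D}, we get $e^{-D/128}\le(16(N+1)T)^{-2}$, so the bound is at most $1/128$.

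\textbf{Part (c).} We induct on $t$. Suppose $x_t=x_t^\circ$ and the earlier responses agree, and work on $\mathcal J^c$. Then no $i>\nu_{t-1}+1$ has $|q_i^\Theta(x_t)|>1/4$, so $p_\Theta(x_t)=\rho_t$. By Lemma~\ref{lem:extended-response}(d), $\bar g_{\rho_t}^\Theta(x_t,Z_t)=g_\Theta(x_t,Z_t)$; the case $\rho_t=T$ is the exact-gradient branch, which agrees by definition. Hence $Y_t=Y_t^\circ$. Since both runs apply the same maps $X_t$, the next queries agree, and at the output $\widehat x_N=x_{N+1}^\circ$. Finally, if $p_\Theta(\widehat x_N)=T$, then on $\mathcal J^c$ we must have $T\le\nu_N+1$, so $\rho_{N+1}=T$ and therefore $\nu_{N+1}=T$.

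The main obstacle is the bookkeeping in (a): showing that the query remains independent of $\Theta_{i}$ on the event that $\nu$ has not yet reached $i-1$, even though responses with $\rho_t=\nu_{t-1}+1$ inject $\Theta_{\nu_{t-1}+2}$ exactly when $\nu$ increments.
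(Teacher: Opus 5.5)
Your proof is correct and follows the paper's argument: the same induction for (a), where a response with index $\rho_s$ may involve $\Theta_{\rho_s+1}$ but is charged to the updated value $\nu_s\ge\rho_s$; the same union bound over the at most $(N+1)T$ pairs $(s,i)$ in (b); and the same agreement-by-induction for (c). The only difference is cosmetic: in (b) you apply Lemma~\ref{lem:sign-correlation} once to the single $\mathcal H_{s,i-2}$-measurable vector $a_i(x_s^\circ)\ind\{\nu_{s-1}<i-1\}$, whereas the paper applies it on each event $\{\nu_{s-1}=k\}$ and sums $2e^{-D/128}\Prob(\nu_{s-1}=k)$ over $k\le i-2$, and both give $2e^{-D/128}$ per pair.
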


\begin{proof}
We prove (a) by induction on $s$.  The first query $x_1^\circ=X_1(U)$
uses no sign vector.  Suppose the assertion holds before query $s$.  On
$\{\nu_{s-1}=k\}$, the definition of $\rho_s$ examines only indices at
most $k+1$.  If the updated value is $\nu_s=k'$, then
$\rho_s\le k'$ and the response map $\bar g_{\rho_s}^\Theta$ depends on
sign vectors only through $\Theta_{\le\min\{\rho_s+1,T\}}\subseteq
\Theta_{\le\min\{k'+1,T\}}$.  Consequently $Y_s^\circ$, and hence the
next query selected from $U,Y_1^\circ,\ldots,Y_s^\circ$, has the asserted
measurability.  The event specifying the new value $k'$ is determined by
the same variables.  This proves the two measurability assertions by
induction.  Independence follows because $U$, the samples, and all sign
vectors are mutually independent.

For (b), fix $s,i,k$ with $i>k+1$.  On
$\{\nu_{s-1}=k\}$, part (a) makes $a_i(x_s^\circ)$ measurable with
respect to $\mathcal H_{s,k}$, while $\Theta_i$ is independent of that
sigma-field.  Lemma~\ref{lem:sign-correlation} therefore gives
\begin{equation}
 \Prob\!\left(\nu_{s-1}=k,
 |q_i^\Theta(x_s^\circ)|>\frac14\right)
 \le2e^{-D/128}\Prob(\nu_{s-1}=k).
 \label{eq:conditional-sign-bound}
\end{equation}
Summing \eqref{eq:conditional-sign-bound} over $k=0,\ldots,i-2$ bounds
the probability for each fixed pair $(s,i)$ by $2e^{-D/128}$.
There are at most $(N+1)T$ pairs $(s,i)$.  Hence
\[
 \Prob(\mathcal J)
 \le2(N+1)T e^{-D/128}
 \le\frac1{128},
\]
where the last inequality follows from \eqref{eq:n0D}.

For (c), use induction on the query number.  Suppose that the two sequences
agree before a query and that $\mathcal J$ has not occurred there.  Then no
index larger than $\nu_{t-1}+1$ satisfies the inequality in
\eqref{eq:random-last-crossed}, so
$p_\Theta(x_t^\circ)=\rho_t$.  Lemma~\ref{lem:extended-response}(d) gives
the equality of the two response maps when $\rho_t<T$.  When
$\rho_t=T$, both maps equal $\nabla F_\Theta(x_t^\circ)$ by definition.
Thus
$g_\Theta(x_t^\circ,Z_t)=
\bar g_{\rho_t}^\Theta(x_t^\circ,Z_t)$,
and the two responses, as well as the next queries selected by the same
map $X_{t+1}$, agree.  The same argument at the final output proves
\eqref{eq:actual-comparison-agreement}.  If
$p_\Theta(\widehat x_N)=T$ on $\mathcal J^c$, then
$q_T^\Theta(x_{N+1}^\circ)$ satisfies the defining inequality for
$\rho_{N+1}$; thus $\rho_{N+1}=T$ and \eqref{eq:output-progress-implication}
follows.
\end{proof}

\subsection{The response cost for one coordinate}

We now fix $i$ and analyze exactly the responses that can depend on
$\Theta_i$.  The shift by one is important: by
Definition~\ref{def:extended-response}, the relevant response distribution
is $\mathsf P_{i-1}^\Theta$, not $\mathsf P_i^\Theta$.

\begin{definition}[Counts for coordinate $i$]
\label{def:coordinate-counts}
For $i\in[T]$ and $s\in[N+1]$, define
\begin{equation}
 R_i(s):=\sum_{t=1}^{s-1}
 \ind\{\nu_{t-1}<i,\ \rho_t=i-1\}.
 \label{eq:responses-before-query}
\end{equation}
Thus $R_i(s)$ is the number of responses already produced before
$x_s^\circ$ was selected that had response index $i-1$ while
$\nu_{t-1}<i$.  For an integer $n\ge1$, define
\begin{equation}
 \mathcal A_{i,n}:=
 \left\{\exists\,s\in[N+1]:
 \nu_{s-1}=i-1,\quad R_i(s)<n,\quad
 |q_i^\Theta(x_s^\circ)|>\frac14\right\}.
 \label{eq:early-coordinate-event}
\end{equation}
Also define the total count
\begin{equation}
 L_i:=\sum_{t=1}^N
 \ind\{\nu_{t-1}<i,\ \rho_t=i-1\}.
 \label{eq:Li-definition}
\end{equation}
\end{definition}

\begin{lemma}[Probability of reaching one coordinate with few responses]
\label{lem:one-link}
For every $i\in[T]$ and every integer $n\ge1$,
\begin{equation}
 \Prob(\mathcal A_{i,n})
 \le
 \frac{nKD\eps^2/\sigma^2+\log2}
 {D/128-\log(2(N+1))}.
 \label{eq:one-link-bound}
\end{equation}
In particular,
\begin{equation}
 \Prob(\mathcal A_{i,n_0})
 \le\frac3{128}<\frac1{32}.
 \label{eq:quick-crossing}
\end{equation}
\end{lemma}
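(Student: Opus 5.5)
The bound has the shape of a Fano/data-processing inequality: the numerator is a KL budget plus $\log 2$, and the denominator is $-\log$ of a small probability under a reference law minus $\log$ of the number of candidate stopping times. I plan to prove it through a change of measure between two fixed-length continuations, using the Donsker--Varadhan (or binary data-processing) inequality
\[
 \mathbf P(E)\le\frac{\KL(\mathbf P\|\mathbf Q)+\log2}{\log(1/\mathbf Q(E))}.
\]

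\textbf{Step 1: the two experiments.} Fix $i$ and $n$. Build two processes on the same algorithm, seed and samples.
\begin{itemize}
\item Under $\mathbf P$ it is the comparison interaction of Definition~\ref{def:comparison-interaction}, truncated as follows. Once the count $R_i$ reaches $n$, or once $\nu$ reaches $i$, every response index $i-1$ is switched to the \emph{reference} law, or equivalently we stop caring.
\item Under $\mathbf Q$, every response with $\nu_{t-1}<i$ and $\rho_t=i-1$ is drawn from $\mathsf Q_{i-1}^{\Theta_{\le i-1}}$ instead of $\mathsf P_{i-1}^\Theta$.
\end{itemize}
Cleanest would be to define $\mathbf Q$ as the process that uses fair signs at the first $n$ such positions and agrees otherwise. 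Then $\mathbf P$ and $\mathbf Q$ coincide on the event $\mathcal A_{i,n}$ up to the first time it is witnessed, because before that time at most $n-1$ responses of index $i-1$ have occurred.

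\textbf{Step 2: KL bound.} By the chain rule stated at the start of Section~\ref{sec:information}, the conditional laws differ at no more than $n$ positions. At each of those positions Lemma~\ref{lem:extended-response}(c) gives a bound of $KD\eps^2/\sigma^2$ at an arbitrary query point; this is exactly why the extension to all $(p,x)$ was needed. Hence
\[
 \KL(\mathbf P\|\mathbf Q)\le nKD\eps^2/\sigma^2.
\]
Responses with other indices $\rho_t\ne i-1$ are either independent of $\Theta_i$ (index $<i-1$) or occur only after $\nu\ge i$, which lies beyond the event of interest. I will therefore couple them identically in both experiments.

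\textbf{Step 3: the event is rare under $\mathbf Q$.} In the $\mathbf Q$ experiment, up to the first $s$ with $\nu_{s-1}=i-1$ and $|q_i^\Theta(x_s^\circ)|>1/4$, every response is generated from $U$, the samples and $\Theta_{<i}$ only. By Lemma~\ref{lem:extended-response}(a,b), the fair-sign law does not involve $\Theta_i$, and responses with indices $<i-1$ use only $\Theta_{\le i-1}$. So each candidate query $x_s^\circ$, $s\in[N+1]$, is, on the relevant event, a function of variables independent of $\Theta_i$. Lemma~\ref{lem:sign-correlation} bounds each term by $2e^{-D/128}$, and a union bound over $s$ gives
\[
 \mathbf Q(\mathcal A_{i,n})\le 2(N+1)e^{-D/128}.
\]
Hence $\log(1/\mathbf Q(\mathcal A_{i,n}))\ge D/128-\log(2(N+1))$. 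Plugging into the Step~1 inequality yields \eqref{eq:one-link-bound}, since $\Prob(\mathcal A_{i,n})=\mathbf P(\mathcal A_{i,n})$ by the coupling.

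\textbf{Step 4: the numerics for $n_0$.} Take $n=n_0\le \sigma^2/(2^{14}K\eps^2)$, so $n_0KD\eps^2/\sigma^2\le D/2^{14}$. The choice of $D$ gives $\log(2(N+1))\le\log(16(N+1)T)\le D/256$, so the denominator is at least $D/256$. The ratio is then at most $256/2^{14}+256\log 2/D\le 1/64+1/128=3/128$, using $D\ge2^{15}$.

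\textbf{Main obstacle.} The delicate part is Step~3 together with the coupling in Step~1. One must ensure that the $\mathbf Q$-process queries never use $\Theta_i$ before the crossing time, including at indices $\rho_t=i-1$ after $n$ responses have been spent. I would handle this by letting $\mathbf Q$ replace \emph{all} index-$(i-1)$ responses while $\nu<i$ by fair ones. I would then count KL only on those positions preceding the stopping time, by working with the stopped history $\sigma$-field: the event $\mathcal A_{i,n}$ is measurable with respect to the history stopped at the $n$-th such response. So it suffices to compare the stopped laws, whose KL is at most $n$ times the per-step bound.
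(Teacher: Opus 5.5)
Your proposal is correct and is essentially the paper's argument: you replace at most $n$ index-$(i-1)$ response laws by the fair-sign reference laws, bound the KL divergence by the chain rule and Lemma~\ref{lem:extended-response}(c), bound the reference probability by Lemma~\ref{lem:sign-correlation} with a union bound over $N+1$ query positions, and conclude with $\operatorname{kl}(u\|v)\ge u\log(1/v)-\log 2$ and the same numerics for $n_0$. The only differences are in bookkeeping: the paper conditions on the history at the entry time $\tau_i$ instead of working unconditionally, and its explicit continuation (an index $\lambda_s^{(i)}$ that never tests coordinate $i$, and a point mass at $0$ after $n$ counted responses) is exactly the auxiliary process your Step~3 needs to make ``on the relevant event'' rigorous, because Lemma~\ref{lem:sign-correlation} requires a query vector independent of $\Theta_i$ outright, not merely up to the crossing time.
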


\begin{proof}
We condition only on variables that are available before the first response
whose distribution can depend on $\Theta_i$.  For $i=1$, this means
conditioning on $U$ before $x_1^\circ=X_1(U)$ is examined.  For $i\ge2$,
let
\begin{equation}
 \tau_i:=\min\{t\in[N]:\nu_{t-1}=i-2,\ \rho_t=i-1\},
 \label{eq:entry-time}
\end{equation}
with the minimum equal to $\infty$ if the set is empty.  If
$\mathcal A_{i,n}$ occurs, then $\tau_i\le N$.  Fix $t\in[N]$ and condition
on a value of
\begin{equation}
 \bigl(\Theta_{<i},U,Y_1^\circ,\ldots,Y_{t-1}^\circ,
 x_t^\circ,\rho_t,\nu_0,\ldots,\nu_{t-1}\bigr)
 \label{eq:entry-data}
\end{equation}
for which $\tau_i=t$.  Before time $t$, every response index is at most
$i-2$; its response map therefore depends only on
$\Theta_{\le i-1}$.  The conditions $\nu_{t-1}=i-2$ and $\rho_t=i-1$ are
also determined by the variables in \eqref{eq:entry-data} without using
$\Theta_i$.  It follows that, under this conditional law, $\Theta_i$ is
still uniform and independent of the listed variables.  This conclusion is
also true for $i=1$ after conditioning on $U$.

Denote the conditioned value just fixed by $\eta$, and let $\Prob_\eta$ denote
the corresponding regular conditional probability.  For $i=1$, $\eta$ is the
fixed value of $U$, and we put $t=0$.  We now define new random variables
for a finite continuation; they are denoted by
$x_s^{(i)},Y_s^{(i)},\lambda_s^{(i)}$, and $c_s^{(i)}$ and are distinct
from the comparison variables in
Definition~\ref{def:comparison-interaction}.  Set
\begin{equation}
 s_i:=
 \begin{cases}
  1,&i=1,\\
  t+1,&i\ge2.
 \end{cases}
 \label{eq:first-coordinate-query}
\end{equation}
For $i\ge2$, first generate
\begin{equation}
 Y_t^{(i)}\sim
 \mathsf P_{i-1}^\Theta(x_t^\circ,\cdot)
 \quad\text{and set}\quad c_{t+1}^{(i)}:=1.
 \label{eq:entry-response}
\end{equation}
Thus $x_t^\circ$ is not examined again for coordinate $i$: its response is
the first response counted for that coordinate, and the first query examined
for coordinate $i$ is $x_{t+1}^{(i)}$ (which is the final output when
$t=N$).  For $i=1$, set $c_1^{(1)}:=0$, so $x_1^{(1)}=X_1(U)$ is examined
before any response is counted.

For every $s_i\le s\le N+1$, define the query recursively by
\begin{equation}
 x_s^{(i)}:=
 \begin{cases}
 X_s(U,Y_1^{(1)},\ldots,Y_{s-1}^{(1)}),&i=1,\\
 X_s(U,Y_1^\circ,\ldots,Y_{t-1}^\circ,
       Y_t^{(i)},\ldots,Y_{s-1}^{(i)}),&i\ge2.
 \end{cases}
 \label{eq:coordinate-continuation-query}
\end{equation}
with empty lists omitted.
For $s\le N$, set
\begin{equation}
 \lambda_s^{(i)}:=
 \max\!\left(
 \left\{j\in\{1,\ldots,i-1\}:
 |q_j^\Theta(x_s^{(i)})|>\frac14\right\}\cup\{0\}
 \right),
 \label{eq:fixed-continuation-index}
\end{equation}
where $\{1,\ldots,i-1\}$ is empty when $i=1$.  Conditional on all
previously defined variables and on $\Theta_i$, generate
\begin{equation}
 Y_s^{(i)}\sim
 \begin{cases}
 \mathsf P_{\lambda_s^{(i)}}^\Theta(x_s^{(i)},\cdot),
       &c_s^{(i)}<n,\\
 \text{the point mass at }0\in\R^{TD},&c_s^{(i)}=n,
 \end{cases}
 \label{eq:coordinate-continuation-response}
\end{equation}
and update
\begin{equation}
 c_{s+1}^{(i)}
 :=c_s^{(i)}+
 \ind\{c_s^{(i)}<n,\ \lambda_s^{(i)}=i-1\}.
 \label{eq:coordinate-continuation-count}
\end{equation}
The indicator of $|q_i^\Theta(x_s^{(i)})|>1/4$ is absent from
\eqref{eq:fixed-continuation-index}--\eqref{eq:coordinate-continuation-count}
and therefore does not affect any later query or response.  Define the event
on this continuation by
\begin{equation}
 E_{i,n}^{(t)}:=
 \left\{\exists\,s\in\{s_i,\ldots,N+1\}:
 c_s^{(i)}<n,\quad
 |q_i^\Theta(x_s^{(i)})|>\frac14\right\}.
 \label{eq:continuation-event}
\end{equation}
This is a fixed finite construction for every value of $\Theta_i$; queries
chosen after $c_s^{(i)}=n$ do not enter $E_{i,n}^{(t)}$.

For the conditioned value $\eta$, let $\mathbf P_{i,\eta}$ be the joint law of
$\Theta_i$, the queries in \eqref{eq:coordinate-continuation-query}, and
all newly generated responses in
\eqref{eq:entry-response} and
\eqref{eq:coordinate-continuation-response}.  Let $\mathbf Q_{i,\eta}$ use
the same recursion, except that the response in \eqref{eq:entry-response}
when $i\ge2$, and every later response with
$c_s^{(i)}<n$ and $\lambda_s^{(i)}=i-1$, is sampled from
$\mathsf Q_{i-1}^{\Theta_{<i}}$ instead of
$\mathsf P_{i-1}^\Theta$.  At most $n$ response distributions are replaced.

Until the earlier of the first query satisfying
$|q_i^\Theta(x_s^{(i)})|>1/4$ and the production of the $n$th response
with index $i-1$, induction shows that this continuation and
\eqref{eq:comparison-recursion} have the same queries and response
distributions.  Indeed, before either event, the comparison recursion has
$\nu_{s-1}=i-1$, the displayed inequality is false, and its response index
equals $\lambda_s^{(i)}$.  Once the $n$th such response has been produced,
$R_i(s)\ge n$ at every later query, so $\mathcal A_{i,n}$ cannot occur.
Consequently
\begin{equation}
 \Prob_\eta(\mathcal A_{i,n})
 =\mathbf P_{i,\eta}(E_{i,n}^{(t)}).
 \label{eq:continuation-agreement}
\end{equation}

Under $\mathbf Q_{i,\eta}$, a response with index at most $i-2$ depends only
on $\Theta_{<i}$, every response with index $i-1$ uses the reference
distribution, and each remaining response is the fixed vector $0$.
Induction over $s$ therefore shows that every $x_s^{(i)}$ is independent of
$\Theta_i$.  Lemma~\ref{lem:sign-correlation} and a union bound over at
most $N+1$ query positions give
\begin{equation}
 \mathbf Q_{i,\eta}(E_{i,n}^{(t)})
 \le2(N+1)e^{-D/128}.
 \label{eq:reference-event-bound}
\end{equation}
The union bound may include positions with $c_s^{(i)}=n$, although those
positions are excluded from $E_{i,n}^{(t)}$.

The conditional KL chain rule and Lemma~\ref{lem:extended-response}(c) give
\begin{equation}
 \KL(\mathbf P_{i,\eta}\|\mathbf Q_{i,\eta})
 \le nKD\frac{\eps^2}{\sigma^2}.
 \label{eq:coordinate-kl-chain}
\end{equation}
Indeed, the two laws use identical conditional distributions at all but at
most $n$ response positions with index $i-1$, and each such position
contributes at most the right side of \eqref{eq:extended-kl}.

For $u,v\in[0,1]$, define the binary divergence
\[
 \operatorname{kl}(u\|v)
 :=u\log\frac uv+(1-u)\log\frac{1-u}{1-v},
\]
with the usual continuous conventions at the endpoints.  If
$u=\mathbf P_{i,\eta}(E_{i,n}^{(t)})$ and
$v=\mathbf Q_{i,\eta}(E_{i,n}^{(t)})$, data processing,
\eqref{eq:reference-event-bound}, and \eqref{eq:coordinate-kl-chain} give
\[
 nKD\frac{\eps^2}{\sigma^2}
 \ge\operatorname{kl}(u\|v)
 \ge u\log(1/v)-\log2
 \ge u\bigl(D/128-\log(2(N+1))\bigr)-\log2.
\]
The denominator is positive by \eqref{eq:n0D}, so rearranging proves
\eqref{eq:one-link-bound} almost every conditioning value \(\eta\). Integrating with
respect to the conditioning law proves the unconditional bound.

Finally, \eqref{eq:n0D} implies
\[
 n_0KD\eps^2/\sigma^2\le D/2^{14},
 \qquad
 \log2\le D/2^{15},
 \qquad
 D/128-\log(2(N+1))\ge D/256.
\]
Substitution into \eqref{eq:one-link-bound} proves
\eqref{eq:quick-crossing}.
\end{proof}

\subsection{Summing the response counts}

Combining all preceding results, we arrive at the proposition below.

\begin{proposition}[Information bound for the algorithm's output]
\label{prop:information-bound}
If $N\le Tn_0/2$, then
\begin{equation}
 \Prob\!\left(\|\nabla F_\Theta(\widehat x_N)\|>2\eps\right)
 \ge\frac{119}{128}.
 \label{eq:gradient-probability}
\end{equation}
\end{proposition}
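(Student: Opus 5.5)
We reduce the statement to showing that, on $\mathcal J^c$, the comparison interaction reaches $\nu_{N+1}=T$ with probability at most $8/128$. Lemma~\ref{lem:comparison-interaction}(b) then gives the total failure probability $9/128$, and Lemma~\ref{lem:certificate} converts this into the gradient bound.

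\textbf{Reduction to the output.} If $\|\nabla F_\Theta(\widehat x_N)\|\le2\eps$, then Lemma~\ref{lem:certificate} forces $|q_j^\Theta(\widehat x_N)|>1/2$ for every $j$. In particular $p_\Theta(\widehat x_N)=T$. On $\mathcal J^c$, \eqref{eq:output-progress-implication} then gives $\nu_{N+1}=T$. Hence
\[
\Prob\bigl(\|\nabla F_\Theta(\widehat x_N)\|\le2\eps\bigr)\le\Prob(\mathcal J)+\Prob(\nu_{N+1}=T)\le\tfrac1{128}+\Prob(\nu_{N+1}=T),
\]
and it suffices to prove $\Prob(\nu_{N+1}=T)\le 8/128=1/16$.

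\textbf{Each link is crossed at a query.} Since $\rho_s\le\nu_{s-1}+1$, the sequence $\nu$ increases by at most one per step. So if $\nu_{N+1}=T$, then for each $i\in[T]$ there is a first $s\in[N+1]$ with $\nu_{s-1}=i-1$ and $|q_i^\Theta(x_s^\circ)|>1/4$ (so that $\rho_s=i$). At that $s$, either $R_i(s)<n_0$, in which case $\mathcal A_{i,n_0}$ occurs, or $R_i(s)\ge n_0$, which forces $L_i\ge n_0$.

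\textbf{Disjoint counts.} The sets $\{t:\nu_{t-1}<i,\ \rho_t=i-1\}$ are disjoint across $i$, because each $t$ has a single value $\rho_t$. Hence $\sum_iL_i\le N\le Tn_0/2$, and at most $T/2$ indices can have $L_i\ge n_0$. So on $\{\nu_{N+1}=T\}$, at least $T/2$ of the events $\mathcal A_{i,n_0}$ occur. By Markov's inequality and \eqref{eq:quick-crossing},
\[
\Prob(\nu_{N+1}=T)\le\Prob\Bigl(\sum_i\ind_{\mathcal A_{i,n_0}}\ge T/2\Bigr)\le\frac{2}{T}\sum_i\Prob(\mathcal A_{i,n_0})\le2\cdot\frac3{128}=\frac6{128}.
\]
This is below $8/128$, so the failure probability is at most $7/128\le 9/128$ and the stated bound $119/128$ follows.

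\textbf{Main obstacle.} The delicate point is the counting step. We must check that hitting link $i$ at a query with $R_i(s)\ge n_0$ really implies $L_i\ge n_0$. This holds because $R_i(s)\le L_i$: the two sums count the same indicators, with $R_i(s)$ running over $t\le s-1\le N$. We must also check the edge case $T=1$, where $T/2$ may not be an integer. There $N\le n_0/2<n_0$ gives $L_1<n_0$ directly, so $\mathcal A_{1,n_0}$ itself must occur and its probability is at most $3/128$.
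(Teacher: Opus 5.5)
Your proof is correct and follows the paper's argument essentially step for step: the same reduction through Lemma~\ref{lem:certificate} and Lemma~\ref{lem:comparison-interaction}(c), the same disjointness bound $\sum_i L_i\le N$, and Markov's inequality applied to the indicators of $\mathcal A_{i,n_0}$ (the paper's $S_i$). The differences are cosmetic. You use only $R_i(s)\le L_i$ where the paper uses $R_i(s)=L_i$, and you use the sharper constant $3/128$ from \eqref{eq:quick-crossing}, which gives $7/128$ instead of $9/128$. Your separate $T=1$ case is unnecessary, since Markov's inequality does not require $T/2$ to be an integer.
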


\begin{proof}
A response at time $t$ can be included in $L_i$ only for
$i=\rho_t+1$.  Therefore the counts in \eqref{eq:Li-definition} satisfy
\begin{equation}
 \sum_{i=1}^T L_i\le N.
 \label{eq:Li-sum}
\end{equation}
Define
\[
 S_i:=\ind\{\nu_{N+1}\ge i,\ L_i<n_0\}.
\]
If $\nu_{N+1}\ge i$, let $s$ be the first query or the final output for
which $\nu_s\ge i$.  The update rule implies
$\nu_{s-1}=i-1$, $|q_i^\Theta(x_s^\circ)|>1/4$, and
$R_i(s)=L_i$, because no term in $L_i$ is counted after $\nu$ reaches $i$.
Consequently
\[
 \{\nu_{N+1}\ge i,\ L_i<n_0\}=\mathcal A_{i,n_0},
\]
and Lemma~\ref{lem:one-link} gives
\begin{equation}
 \E S_i\le\frac1{32},
 \qquad
 \E\sum_{i=1}^T S_i\le\frac T{32}.
 \label{eq:short-count}
\end{equation}

On $\{\nu_{N+1}=T\}$ every $i\in[T]$ satisfies
$\nu_{N+1}\ge i$.  Hence, using \eqref{eq:Li-sum} and the assumed budget,
\[
 \sum_{i=1}^T S_i
 =T-\sum_{i=1}^T\ind\{L_i\ge n_0\}
 \ge T-\frac1{n_0}\sum_{i=1}^TL_i
 \ge\frac T2.
\]
Markov's inequality and \eqref{eq:short-count} therefore yield
\begin{equation}
 \Prob(\nu_{N+1}=T)
 \le\Prob\!\left(\sum_{i=1}^TS_i\ge\frac T2\right)
 \le\frac1{16}.
 \label{eq:comparison-completion}
\end{equation}

By Lemma~\ref{lem:comparison-interaction}, on $\mathcal J^c$ the actual
output equals $x_{N+1}^\circ$, and
$p_\Theta(\widehat x_N)=T$ implies $\nu_{N+1}=T$.  Thus
\[
 \{p_\Theta(\widehat x_N)=T\}
 \subseteq\mathcal J\cup\{\nu_{N+1}=T\},
\]
so \eqref{eq:jump-bound} and \eqref{eq:comparison-completion} give
\begin{equation}
 \Prob\bigl(p_\Theta(\widehat x_N)=T\bigr)
 \le\frac1{128}+\frac1{16}=\frac9{128}.
 \label{eq:actual-completion}
\end{equation}
Finally, Lemma~\ref{lem:certificate} gives
\[
 \|\nabla F_\Theta(\widehat x_N)\|\le2\eps
 \quad\Longrightarrow\quad
 |q_i^\Theta(\widehat x_N)|>\frac12\quad\text{for every }i\in[T],
\]
which implies $p_\Theta(\widehat x_N)=T$.  Taking complements in
\eqref{eq:actual-completion} proves \eqref{eq:gradient-probability}.
\end{proof}

This completes the core part of the proof.

\section{Completing the lower-bound construction}
\label{sec:parameters}

Recall that $T$ is the number of links and $n_0$
is the number of responses needed per link.  The small-accuracy condition
ensures that the floors in their definitions lose at most a factor of two.

Let
\begin{equation}
 c_0=2^{-20},
 \qquad
 c=\frac1{2560C\,2^{39}},
 \qquad
 Q=\frac{\Delta L}{\eps^2},
 \qquad
 m=\frac{\sigma^2}{\eps^2}.
 \label{eq:constants-again}
\end{equation}
Under \eqref{eq:accuracy-regime}, both $Q$ and $m$ are at least $2^{40}$.
In particular, $Q/(320C)>2$ and
$m/(2^{14}K)=m/2^{38}\ge4$.  Since $\lfloor x\rfloor\ge x/2$ for
$x\ge2$, the floors in \eqref{eq:ATdelta} and \eqref{eq:n0D} satisfy
\begin{equation}
 T\ge\frac{Q}{640C},
 \qquad
 n_0\ge\frac{m}{2^{39}},
 \qquad
 \frac{Tn_0}{2}\ge\frac{Qm}{1280C\,2^{39}}.
 \label{eq:floor-bounds}
\end{equation}
Since $m\ge1$, $Q+Qm\le2Qm$.  Thus
\begin{equation}
 N\le c(Q+Qm)\quad\Longrightarrow\quad N\le\frac{Tn_0}{2}.
 \label{eq:budget-to-links}
\end{equation}
Combining \eqref{eq:gradient-probability} and
\eqref{eq:budget-to-links},
\begin{equation}
 \E_{\Theta,U,Z_{1:N}}\|\nabla F_\Theta(\widehat x_N)\|
 >2\eps\frac{119}{128}>\eps.
 \label{eq:average-failure}
\end{equation}

The prior on $\Theta$ is uniform on the finite set $\{-1,1\}^{TD}$.  Hence
\eqref{eq:average-failure} is an average of finitely many conditional
expectations over $U,Z_{1:N}$.  There exists a deterministic
$\theta^\star$ for which
\[
 \E_{U,Z_{1:N}}\|\nabla F_{\theta^\star}(\widehat x_N)\|>\eps.
\]
We choose $\theta^\star$ after the algorithm is fixed, but before any
realization of $U,Z_{1:N}$ is drawn, so that the objective, oracle map, and product-uniform sample law are then fixed throughout the interaction.

The dimension, chosen before the algorithm on $\R^d$, is
\begin{equation}
 d=TD
 =T\left\lceil\max\left\{2^{15},
 256\log\bigl(16(N+1)T\bigr)\right\}\right\rceil.
 \label{eq:dimension}
\end{equation}
It depends only on the displayed parameters and $N$.  Eliminating $T$ from
the right side gives
\begin{equation}
 d\le\frac{Q}{320C}
 \left[2^{15}+1+256\log\left(
 \frac{16(N+1)Q}{320C}\right)\right].
 \label{eq:dimension-bound}
\end{equation}
Indeed, with $X=Q/(320C)$, one has $1\le T\le X$ and
\[
 D\le2^{15}+1+256\log(16(N+1)T)
 \le2^{15}+1+256\log(16(N+1)X).
\]
Multiplying by $T\le X$ proves \eqref{eq:dimension-bound}.
\\

Lemmas~\ref{lem:objective} and \ref{lem:oracle} verify every property of the
model.  Equations \eqref{eq:constants-again}--\eqref{eq:average-failure}
prove Theorem~\ref{thm:main}.  Because $m\ge1$, the product term dominates
the deterministic term in the stipulated regime, which permits the stated
sum.  \hfill$\square$

\section{Discussion and limitations}
\label{sec:discussion}

Theorem~\ref{thm:main} shows that 
requiring the stochastic-gradient error to be uniformly bounded almost
surely does not improve the worst-case rate obtained under bounded
variance.  In the regime of the theorem, every randomized adaptive
algorithm requires
\[
  \Omega\!\left(
    \frac{\Delta L}{\eps^2}
    +
    \frac{\Delta L\sigma^2}{\eps^4}
  \right)
\]
queries.  The hard oracle is unbiased, uses one fixed sample law, and
satisfies the noise bound on a common probability-one set for all query
points.

The main novelty is the replacement of the rare-revelation mechanism used
in bounded-variance lower bounds.  There, the next chain coordinate is
revealed with a small probability \(\rho\), and unbiasedness requires a
response of size proportional to \(1/\rho\).  This gives the desired delay
but violates a uniform noise bound.  Here the next coordinate is encoded by
a hidden sign vector, and its gradient contribution is represented as the
mean of bounded, slightly biased signs.  Every response remains uniformly
bounded, while its distribution contains only
\(O(D\eps^2/\sigma^2)\) KL information about the hidden sign vector.

This change also requires a different argument for adaptive algorithms.
The bounded-variance proof directly counts Bernoulli reveal events.  In the
present construction, every response may contain a small amount of
information, so the proof instead accumulates conditional KL divergences.
The extension of the one-response laws to every pair \((p,x)\) makes the KL
chain rule applicable to arbitrary queries generated by random histories,
and the fixed-length comparison removes the dependence of the comparison
horizon on the hidden sign vector.  These steps show that one link requires
\(\Omega(\sigma^2/\eps^2)\) relevant responses.  A separate
\(\log\cosh\) term prevents large-norm queries from exploiting saturation
of the \(\tanh\) encoding.

The result applies only to the oracle class in
Definition~\ref{def:oracle}.  The constructed map \(g(\cdot,z)\) is
generally discontinuous and is not shown to be the gradient of a sample
function.  Thus continuous, samplewise-smooth, conservative, and finite-sum
oracles require separate constructions.  The proof also assumes \(K=1\);
when one sample can be evaluated at several query points, the resulting
cross-query correlations are not controlled by the present argument.

\end{document}